\definecolor{azure(colorwheel)}{rgb}{0.0, 0.5, 1.0}
\newenvironment{customthm}[1]
  {\innercustomthm}
  {\endinnercustomthm}
\newenvironment{custompro}[1]
  {\innercustompro}
  {\endinnercustompro}
\newtheorem*{thm*}{Theorem}
\newtheorem{thm}{Theorem}
\newtheorem{lem}[thm]{Lemma}
\newtheorem{pro}[thm]{Proposition}
\newtheorem{obs}[thm]{Observation}
\newtheorem{ques}[thm]{Question}
\newcommand{\N}{\mathbb{N}}
\newcommand{\col}{\mathrm{col}}
\begin{document}

\title{Maximizing Satisfied Vertex Requests in List Coloring}

\author{ 
Timothy Bennett \thanks{Department of Mathematics and Statistics, University of South Alabama, Mobile, AL, USA (tdb1923@jagmail.southalabama.edu)}
\and
Michael C. Bowdoin \thanks{Mitchell College of Business, University of South Alabama, Mobile, AL, USA (mb2139@jagmail.southalabama.edu)}
\and
Haley Broadus \thanks{Department of Mathematics and Statistics, University of South Alabama, Mobile, AL, USA (hlb2121@jagmail.southalabama.edu)}
\and
Daniel Hodgins \thanks{Department of Mathematics \& Statistics, Auburn University, Auburn, AL, USA (djh0081@auburn.edu)}
\and
Jeffrey A. Mudrock \thanks{Department of Mathematics and Statistics, University of South Alabama, Mobile, AL, USA (mudrock@southalabama.edu)}
\and
Adam K. Nusair \thanks {College of Engineering, University of South Alabama, Mobile, AL, USA, (Adamnusair913@gmail.com)}
\and
Gabriel Sharbel \thanks{School of Computing, University of South Alabama, Mobile, AL, USA (gss2121@jagmail.southalabama.edu)}
\and 
Joshua Silverman \thanks{Department of Mathematics and Statistics, University of South Alabama, Mobile, AL, USA (jrs2123@jagmail.southalabama.edu)}
}
\maketitle

\begin{abstract}

Suppose $G$ is a graph and $L$ is a list assignment for $G$.  A \emph{request} of $L$ is a function $r$ with nonempty domain $D\subseteq V(G)$ such that $r(v) \in L(v)$ for each $v \in D$.  The triple $(G,L,r)$ is $\epsilon$-satisfiable if there exists a proper $L$-coloring $f$ of $G$ such that $f(v) = r(v)$ for at least $\epsilon|D|$ vertices in $D$.  We say $G$ is $(k, \epsilon)$-flexible if $(G,L',r')$ is $\epsilon$-satisfiable whenever $L'$ is a $k$-assignment for $G$ and $r'$ is a request of $L'$.  It is known that a graph $G$ is not $(k, \epsilon)$-flexible for any $k$ if and only if $\epsilon > 1/ \rho(G)$ where $\rho(G)$ is the Hall ratio of $G$. The \emph{list flexibility number} of a graph $G$, denoted $\chi_{\ell flex}(G)$, is the smallest $k$ such that $G$ is $(k,1/ \rho(G))$-flexible.  A fundamental open question on list flexibility numbers asks: Is there a graph with list flexibility number greater than its coloring number?

In this paper, we show that the list flexibility number of any complete multipartite graph $G$ is at most the coloring number of $G$.  We also initiate the study of \emph{list epsilon flexibility functions} of complete bipartite graphs which was first suggested by Kaul, Mathew, Mudrock, and Pelsmajer in 2024.  Specifically, we completely determine the list epsilon flexibility function of $K_{m,n}$ when $m \in \{1,2\}$ and establish some additional bounds for small $m$.  Our proofs reveal a connection to list coloring complete bipartite graphs with asymmetric list sizes which is a topic that was explored by Alon, Cambie, and Kang in 2021.

\medskip

\noindent {\bf Keywords.} list coloring, flexible list coloring

\noindent \textbf{Mathematics Subject Classification.} 05C15

\end{abstract}

\section{Introduction}\label{intro}
In this paper all graphs are nonempty, finite, simple graphs.  Generally speaking, we follow West~\cite{W01} for terminology and notation.  The set of natural numbers is $\N = \{1,2,3, \ldots \}$.  For $m \in \N$, we write $[m]$ for the set $\{1, \ldots, m \}$.  We write $K_{l,n}$ for complete bipartite graphs with partite sets of size $l$ and $n$.  More generally, when $k\geq2$ and $n_1, n_2, \ldots, n_k \in \mathbb{N}$, we use $K_{n_1, n_2, \ldots, n_k}$ for complete $k$-partite graphs with partite sets of size $n_1, n_2, \ldots,$ and $n_k$.  For a graph $G$, we use $d_G(v)$ (or $d(v)$ when $G$ is clear from context) for the degree of a vertex $v$ in $G$, and we use $\Delta(G)$ for the maximum degree of the vertices in $G$.  If $S \subseteq V(G)$, we use $G[S]$ for the subgraph of $G$ induced by $S$.  We write $H \subseteq G$ when $H$ is a subgraph of $G$.   

In this paper we study flexible list coloring of complete multipartite graphs with a special emphasis on complete bipartite graphs.  We begin by reviewing list coloring and introducing flexible list coloring.

\subsection{List Coloring} \label{basic}

For classical vertex coloring of graphs, we wish to color the vertices of a graph $G$ with up to $m$ colors from $[m]$ so that adjacent vertices receive different colors, a so-called \emph{proper $m$-coloring}.  The \emph{chromatic number} of a graph is a well studied graph invariant. Denoted $\chi(G)$, the chromatic number of graph $G$ is the smallest $m$ such that $G$ has a proper $m$-coloring.  

List coloring is a well-known variation on classical vertex coloring that was introduced independently by Vizing~\cite{V76} and Erd\H{o}s, Rubin, and Taylor~\cite{ET79} in the 1970s.  For list coloring, we associate a \emph{list assignment} $L$ with a graph $G$ such that each vertex $v \in V(G)$ is assigned a list of available colors $L(v)$ (we say $L$ is a list assignment for $G$).  We say $G$ is \emph{$L$-colorable} if there is a proper coloring $f$ of $G$ such that $f(v) \in L(v)$ for each $v \in V(G)$ (we refer to $f$ as a \emph{proper $L$-coloring} of $G$).  A list assignment $L$ for $G$ is called a \emph{$k$-assignment} if $|L(v)|=k$ for each $v \in V(G)$.  The \emph{list chromatic number} of a graph $G$, denoted $\chi_\ell(G)$, is the smallest $k$ such that $G$ is $L$-colorable whenever $L$ is a $k$-assignment for $G$.  We also say that $G$ is \emph{$t$-choosable} if $\chi_{\ell}(G) \leq t$.  It is easy to show that for any graph $G$, $\chi(G) \leq \chi_\ell(G)$.  Moreover, it is well-known that the gap between the chromatic number and list chromatic number of a graph can be arbitrarily large since $\chi_{\ell}(K_{n,t}) = n+1$ whenever $t \geq n^n$ (see~\cite{ET79} for additional discussion).  When we are considering list assignments for $G$ that assign lists of various sizes to the vertices of $G$, we need another notion.  If $f: V(G) \rightarrow \N$, we say that $G$ is \emph{$f$-choosable} if $G$ is $L$-colorable whenever $L$ is a list assignment for $G$ satisfying $|L(v)|=f(v)$ for each $v \in V(G)$.  If $f: V(G) \rightarrow \N$ and $g: V(G) \rightarrow \N$ satisfy $g(v) \geq f(v)$ for each $v \in V(G)$, it is immediately clear that $G$ is $f$-choosable only if $G$ is $g$-choosable.    

The \emph{coloring number} of a graph $G$, $\col (G)$, is the smallest integer $d$ such that there exists an ordering, $v_1,\ldots, v_n$, of the elements of $V(G)$ such that $v_i$ has at most $d-1$ neighbors preceding it in the ordering.  For example, when $k \geq 2$ and  $n_1, n_2, \ldots, n_k \in \mathbb{N}$ satisfy $n_1 \leq n_2\leq \cdots \leq n_k$, $\col(K_{n_1,n_2,\ldots, n_k}) = 1 + \sum_{i=1}^{k-1} n_i$. It is also easy to see that $\chi_{\ell} (G) \leq \col(G)$.

Since complete bipartite graphs play an important role in this paper, it is worth noting that finding the exact list chromatic number of complete bipartite graphs is notoriously difficult.  While it has been known since the 1970s that for $1 \leq m \leq n$, $K_{m,n}$ is 2-choosable if and only if $m=1$ and $n \in \N$ or $m=2$ and $n \leq 3$, the 3-choosable complete bipartite graphs were not fully characterized until 1995~\cite{O95} about 20 years after the introduction of list coloring (see section 8.2 in~\cite{W20} for some discussion).  Specifically, $K_{2,n}$ is 3-choosable for each $n \in \N$, and for $3 \leq m \leq n$, $K_{m,n}$ is 3-choosable if and only if $m=3$ and $n \leq 26$, $m=4$ and $n \leq 20$, $m=5$ and $n \leq 12$, or $m=6$ and $n \leq 10$.

Some notation from~\cite{AC21} will be used throughout the paper.  Suppose $G$ is a bipartite graph with bipartition $A$, $B$.  A list assignment $L$ for $G$ is called a \emph{$(k_A,k_B)$-assignment} if $|L(v)|=k_A$ for each $v \in A$ and $|L(v)|=k_B$ for each $v \in B$.  We say that $G$ is $(k_A,k_B)$-choosable if $G$ is $L$-colorable whenever $L$ is a $(k_A,k_B)$-assignment for $G$.  In the case that $G = K_{m,n}$, when we say $G$ is \emph{$(a,b)$-choosable} we mean that $G$ is $(k_A,k_B)$-choosable where $A$ is the partite set of $G$ of size $m$, $B$ is the partite set of $G$ of size $n$, $k_A = a$, and $k_B=b$.  We adopt an analogous convention for \emph{$(a,b)$-assignment of $G$}.  

\subsection{Flexible List Coloring}

Flexible list coloring was introduced by Dvo\v{r}\'{a}k, Norin, and Postle in~\cite{DN19} in order to address a situation in list coloring where we still seek a proper list coloring, but each vertex may have a preferred color assigned to it, and for those vertices we wish to color as many of them with their preferred colors as possible.  Specifically, suppose $G$ is a graph and $L$ is a list assignment for $G$.  A \emph{request of $L$} is a function $r$ with nonempty domain $D\subseteq V(G)$ such that $r(v) \in L(v)$ for each $v \in D$.  For each $v \in D$ we say that the pair $(v,r(v))$ is the \emph{vertex request for $v$} with respect to $r$ (we omit the phrase with respect to $r$ when $r$ is clear from context).  For any $\epsilon \in [0,1]$, the triple $(G,L,r)$ is \emph{$\epsilon$-satisfiable} if there exists a proper $L$-coloring $f$ of $G$ such that $f(v) = r(v)$ for at least $\epsilon|D|$ vertices in $D$.  When $f(v) = r(v)$ for some $v \in D$ we say $f$ \emph{satisfies} the vertex request of $r$ for $v$.  

We say that the pair $(G,L)$ is \emph{$\epsilon$-flexible} if $(G,L,r)$ is $\epsilon$-satisfiable whenever $r$ is a request of $L$.  Finally, we say that $G$ is \emph{$(k, \epsilon)$-flexible} if $(G,L)$ is $\epsilon$-flexible whenever $L$ is a $k$-assignment for $G$.  Note that if $G$ is $k$-choosable, then $G$ is $(k,0)$-flexible.  The following observation is also immediate.    

\begin{obs} \label{obs: basic}
Suppose $G$ is $(k,\epsilon)$-flexible for some $\epsilon \in [0,1]$. Then, the following statements hold. 
\\
(i) $G$ is $(k',\epsilon')$-flexible for any $k'\ge k$ and $0 \leq \epsilon'\le \epsilon$. 
\\
(ii) Any subgraph $H$ of $G$ is $(k,\epsilon)$-flexible.
\\
(iii) $G$ is $k$-choosable.
\end{obs}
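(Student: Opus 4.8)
The plan is to prove each of the three parts by a short reduction to the hypothesis that $G$ is $(k,\epsilon)$-flexible: in each case, starting from an instance that witnesses a potential failure, I would build a companion instance for $G$ to which the hypothesis applies and then transfer the resulting proper coloring back. Throughout I implicitly assume $k \ge 1$, so that all lists are nonempty and requests exist; this is the standing assumption in this setting.

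For Part (i) I would first dispense with the dependence on $\epsilon$. If $L$ is a $k$-assignment, $r$ a request of $L$ with domain $D$, and $f$ a proper $L$-coloring with $f(v) = r(v)$ for at least $\epsilon|D|$ vertices of $D$, then, since $\epsilon' \le \epsilon$, the same $f$ satisfies at least $\epsilon'|D|$ vertex requests; hence $\epsilon$-satisfiability of $(G,L,r)$ implies $\epsilon'$-satisfiability, and so $(k,\epsilon)$-flexibility implies $(k,\epsilon')$-flexibility. For the dependence on $k$, suppose $k' \ge k$, let $L'$ be a $k'$-assignment, and let $r'$ be a request of $L'$ with domain $D$. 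I would shrink each list to a $k$-element subset $L(v) \subseteq L'(v)$, choosing the subsets so that $r'(v) \in L(v)$ for every $v \in D$; this is possible because $|L'(v)| = k' \ge k \ge 1$ and $r'(v) \in L'(v)$. Then $L$ is a $k$-assignment and $r'$ is still a request of $L$, so by hypothesis there is a proper $L$-coloring $f$ satisfying at least $\epsilon|D|$ of the requests of $r'$; since $L(v) \subseteq L'(v)$ for all $v$, this $f$ is also a proper $L'$-coloring, witnessing $\epsilon$-satisfiability of $(G,L',r')$. Composing the two monotonicities gives the full statement.

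For Part (ii), let $H \subseteq G$, let $L_H$ be a $k$-assignment for $H$, and let $r_H$ be a request of $L_H$ with domain $D \subseteq V(H)$. I would extend $L_H$ to a $k$-assignment $L$ of $G$ by assigning an arbitrary $k$-element list to each vertex of $V(G) \setminus V(H)$; then $r_H$ is a request of $L$ with the same domain. Applying $(k,\epsilon)$-flexibility of $G$ yields a proper $L$-coloring $f$ of $G$ satisfying at least $\epsilon|D|$ vertex requests, and since every edge of $H$ is an edge of $G$, the restriction $f|_{V(H)}$ is a proper $L_H$-coloring of $H$; it satisfies the same requests because they are all located in $D \subseteq V(H)$. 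Hence $(H, L_H, r_H)$ is $\epsilon$-satisfiable.

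For Part (iii), the key observation is that for any list assignment $L$ and any request $r$, the assertion that $(G,L,r)$ is $\epsilon$-satisfiable includes in particular the existence of a proper $L$-coloring of $G$. So given an arbitrary $k$-assignment $L$, I would pick a vertex $v_0 \in V(G)$ (possible since $G$ is nonempty) and a color $c \in L(v_0)$, and form the request $r$ with domain $\{v_0\}$ and $r(v_0) = c$; by $(k,\epsilon)$-flexibility, $(G,L,r)$ is $\epsilon$-satisfiable, hence $G$ is $L$-colorable. As $L$ was arbitrary, $G$ is $k$-choosable. I do not anticipate a genuine obstacle here; the only points needing care are the bookkeeping ones — checking that each constructed list assignment is really a $k$-assignment, that each constructed function is a valid request (in particular has nonempty domain), and, in the $k' \ge k$ case of Part (i), that the shrunken lists can be chosen to retain the requested colors.
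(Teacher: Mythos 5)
Your proof is correct, and it supplies exactly the routine verifications behind the paper's claim: the paper states this Observation without proof, calling it immediate, and your three reductions (list-shrinking for the $k$-monotonicity, extending a $k$-assignment of $H$ arbitrarily to $G$ and restricting the coloring, and using a singleton-domain request to extract $L$-colorability) are the standard arguments one would write out. No gaps; in particular you correctly note that $\epsilon$-satisfiability already demands the existence of a proper $L$-coloring even when $\epsilon|D|$ rounds down to zero satisfied requests, which is the only point in (iii) that could trip someone up.
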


\subsection{The Hall Ratio, List Flexibility Number, and List Epsilon Flexibility Function}

For a graph $G$, what is the largest $\epsilon$ so that $G$ is $(k,\epsilon)$-flexible for some $k$?  Suppose $r$ is a request with domain $D$ of some list assignment.  It is possible that $r(v)$ is the same color for all $v\in D$; for example, let $L$ be the $k$-assignment such that $L(v)=[k]$ for all $v\in V(G)$, and let $r(v)=1$ for all $v\in D$.  Then at most $\alpha(G[D])$ vertices in $D$ will have their request satisfied.  So, when $G$ is $(k,\epsilon)$-flexible, $\epsilon$ must satisfy $\epsilon |D|\le \alpha(G[D])$ for every nonempty $D\subseteq V(G)$ regardless of $k$.  In particular, $\epsilon\le \min_{\emptyset\not=D\subseteq V(G)} \alpha(G[D])/|D|$ for any $(k,\epsilon)$-flexible graph $G$.

The \emph{Hall ratio} of a graph $G$, denoted $\rho(G)$, is 
$$\rho(G) = \max_{\emptyset\not= H \subseteq G} \frac{|V(H)|}{\alpha(H)}.$$  
The Hall ratio was first studied in 1997 by Hilton, Johnson Jr., and Leonard~\cite{HJ97} under the name fractional Hall-condition number. Since then the Hall ratio has received much attention due to its connection with both list and fractional coloring (e.g., see~\cite{BL22, CC21, CJ00, DM20}).

Note that among subgraphs $H$ of $G$ with fixed vertex set $D$, $\alpha(H)$ is minimized by the induced subgraph $H=G[D]$.  Therefore, $\min_{\emptyset\not=D\subseteq V(G)} \alpha(G[D])/|D| = 1/\rho(G)$.  Moreover, this bound on feasible $\epsilon$ for $(k,\epsilon)$-flexibility is attainable.

\begin{pro} [\cite{KM23}] \label{pro: fundamental}
Graph $G$ is $(\Delta(G)+1,\epsilon)$-flexible if and only if $\epsilon\le 1/\rho(G)$.
\end{pro}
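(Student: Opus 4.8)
The plan is to prove the two implications separately; the forward (``only if'') direction is essentially already contained in the discussion preceding the statement, so the substantive part is the reverse direction, which will follow from a single-pass greedy coloring.

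For the ``only if'' direction, suppose $G$ is $(\Delta(G)+1,\epsilon)$-flexible. As observed above, $\min_{\emptyset \ne D \subseteq V(G)} \alpha(G[D])/|D| = 1/\rho(G)$, so I would fix a nonempty $D \subseteq V(G)$ with $\alpha(G[D]) = |D|/\rho(G)$. Take the $(\Delta(G)+1)$-assignment $L$ with $L(v) = [\Delta(G)+1]$ for every $v \in V(G)$ together with the request $r$ of domain $D$ given by $r(v) = 1$ for all $v \in D$. In any proper $L$-coloring $f$ of $G$, the vertices of $D$ receiving color $1$ form an independent set of $G[D]$, so $f$ satisfies at most $\alpha(G[D]) = |D|/\rho(G)$ vertex requests; since $(G,L)$ is $\epsilon$-flexible there is such an $f$ satisfying at least $\epsilon|D|$ of them, forcing $\epsilon \le 1/\rho(G)$.

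For the ``if'' direction, assume $\epsilon \le 1/\rho(G)$, and let $L$ be an arbitrary $(\Delta(G)+1)$-assignment for $G$ and $r$ an arbitrary request of $L$ with domain $D$. First I would pick a maximum independent set $I$ of $G[D]$; since $\rho(G) \ge |D|/\alpha(G[D])$ we have $|I| = \alpha(G[D]) \ge |D|/\rho(G) \ge \epsilon|D|$. Color every $v \in I$ with $r(v)$ (a proper partial $L$-coloring because $I$ is independent), and then process the vertices of $V(G) \setminus I$ in any order, assigning to each vertex $v$ a color of $L(v)$ not already used on its neighbors; this is always possible since $v$ has at most $d_G(v) \le \Delta(G)$ neighbors in total while $|L(v)| = \Delta(G)+1$. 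The resulting proper $L$-coloring satisfies the vertex request of every vertex of $I$, hence at least $|I| \ge \epsilon|D|$ requests, so $(G,L,r)$ is $\epsilon$-satisfiable; as $L$ and $r$ were arbitrary, $G$ is $(\Delta(G)+1,\epsilon)$-flexible.

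I do not expect a genuine obstacle here: the only point needing care is that beginning the greedy pass with all of $I$ precolored causes no trouble, which holds because independence of $I$ keeps the partial coloring proper and the bound $d_G(v) \le \Delta(G)$ governs the greedy step for every later vertex regardless of how many of its neighbors lie in $I$. The one useful observation is that a maximum independent set of $G[D]$, colored according to $r$, already meets the required $\epsilon|D|$ threshold precisely when $\epsilon \le 1/\rho(G)$.
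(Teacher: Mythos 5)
Your proof is correct and follows the standard argument: the ``only if'' direction is exactly the monochromatic-request construction the paper itself sketches in the paragraph preceding the statement, and the ``if'' direction (precolor a maximum independent set of $G[D]$ per the request, then greedily extend using $|L(v)| = \Delta(G)+1 > d_G(v)$) is the proof from \cite{KM23}, which this paper cites rather than reproves. No gaps.
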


\cref{pro: fundamental} now allows us to present an important definition.  The \emph{list flexibility number} of $G$, denoted $\chi_{\ell flex}(G)$, is the smallest $k$ such that $G$ is $(k,1/\rho(G))$-flexible.  The following result is now immediate.

\begin{pro} [\cite{KM23}] \label{pro: inequalities}
For any graph $G$,
$$\chi(G) \leq \chi_{\ell}(G) \leq \chi_{\ell flex}(G) \leq \Delta(G) + 1.$$
\end{pro}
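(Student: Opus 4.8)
The plan is to read off all three inequalities directly from the definitions, using only the trivial ``monochromatic list'' reduction and \cref{pro: fundamental}, which is already available. Nothing deep is needed here; the work is entirely a definition chase.

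First I would handle $\chi(G)\le\chi_\ell(G)$ in the textbook way. Set $k=\chi_\ell(G)$ and let $L$ be the $k$-assignment with $L(v)=[k]$ for every $v\in V(G)$. Then a proper $L$-coloring of $G$ is exactly a proper $k$-coloring, and since $G$ is $k$-choosable such a coloring exists, giving $\chi(G)\le k=\chi_\ell(G)$.

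Next, for $\chi_\ell(G)\le\chi_{\ell flex}(G)$, I would first note that $\chi_{\ell flex}(G)$ is well defined: since $\alpha(H)\le|V(H)|$ for every subgraph $H\subseteq G$, we have $\rho(G)\ge 1$ and hence $1/\rho(G)\in(0,1]$; moreover, by \cref{pro: fundamental} (with $\epsilon=1/\rho(G)$, which trivially satisfies $\epsilon\le 1/\rho(G)$) the graph $G$ is $(\Delta(G)+1,1/\rho(G))$-flexible, so the set of $k$ for which $G$ is $(k,1/\rho(G))$-flexible is nonempty and has a least element. Now take $k=\chi_{\ell flex}(G)$, so that $G$ is $(k,1/\rho(G))$-flexible; since $1/\rho(G)\in[0,1]$, \cref{obs: basic}(iii) yields that $G$ is $k$-choosable, i.e. $\chi_\ell(G)\le k=\chi_{\ell flex}(G)$.

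Finally, $\chi_{\ell flex}(G)\le\Delta(G)+1$ is immediate from the same application of \cref{pro: fundamental}: $G$ is $(\Delta(G)+1,1/\rho(G))$-flexible, so $\Delta(G)+1$ lies in the set whose minimum defines $\chi_{\ell flex}(G)$. As for the main obstacle --- there genuinely isn't one --- the only thing I would be careful about is verifying that the hypotheses of the cited results hold, in particular that $1/\rho(G)\in[0,1]$ so that \cref{obs: basic}(iii) applies and that $\chi_{\ell flex}(G)$ is well defined before invoking its minimality.
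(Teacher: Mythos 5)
Your proof is correct and matches the paper's intent exactly: the paper states this result as ``immediate'' from \cref{pro: fundamental}, the definition of $\chi_{\ell flex}$, and \cref{obs: basic}(iii), which is precisely the chain of reasoning you spell out (together with the standard constant-list argument for $\chi(G)\le\chi_\ell(G)$). Your extra care about well-definedness of $\chi_{\ell flex}(G)$ and the range of $1/\rho(G)$ is sound and only makes explicit what the paper leaves implicit.
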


Recall that the coloring number of a graph is a natural upper bound for its list chromatic number.  So, it is natural to wonder if this upper bound extends to the list flexibility number. Interestingly, there are no known graphs $G$ for which the list flexibility number of $G$ exceeds the coloring number of $G$. This leads to a fundamental open question on the list flexibility number which we will keep in mind for the remainder of the paper (Question~\ref{ques: coloringnumber} below is Question~10 in~\cite{KM23}).

\begin{ques} [\cite{KM23}] \label{ques: coloringnumber}
    Does there exist a graph $G$ satisfying $\chi_{\ell flex}(G) > \col (G)$?
\end{ques}

For each graph $G$ and fixed $t \in \N$, suppose we are interested in the maximum possible $\epsilon$ such that $G$ is $(t,\epsilon)$-flexible.  The \emph{list epsilon flexibility function} for $G$, denoted $\epsilon_\ell(G,t)$, is the function that maps each $t \geq \chi_{\ell}(G)$ to the largest $\epsilon \in [0,1]$ such that $G$ is $(t,\epsilon)$-flexible.  The study of the list epsilon flexibility function was first suggested in~\cite{KM23}, and the following question served as one of the motivations for this paper.   

\begin{ques} \label{ques: completebipartite}
Suppose $1 \leq m \leq n$ and $G = K_{m,n}$.  What is a formula for $\epsilon_\ell(G,t)$?  
\end{ques}

Note that for any graph $G$ and $t \geq \chi_{\ell}(G)$, $\epsilon_{\ell}(G,t)=a/b$ for some integers $0\le a\le b\le |V(G)|$ and $\epsilon_\ell(G,t) \leq 1/\rho(G)$.  Furthermore, the list epsilon flexibility function of $G$ is eventually constant since $\epsilon_\ell(G,t) = 1/ \rho(G)$ whenever $t \geq \chi_{\ell flex}(G)$.  Since $\chi_{\ell}(K_{1,n}) = \chi_{\ell flex}(K_{1,n})=2$ (see~\cite{BM22}), it follows that $\epsilon_\ell(K_{1,n},t) = 1/2$ whenever $t \geq 2$.  In this paper we make some modest progress on Question~\ref{ques: completebipartite}.  Specficially, we completely determine $\epsilon_\ell(G,t)$ when $G=K_{2,n}$, and we come very close to completely determining $\epsilon_\ell(G,t)$ when $G=K_{3,n}$ (see Theorems~\ref{cor: K2n} and~\ref{thm: K3n} below).

\subsection{Outline of Results}

In Section~\ref{Big}, we show the answer to Question~\ref{ques: coloringnumber} is no when we restrict our attention to complete multipartite graphs.

\begin{thm}\label{thm: awesome_sauce}
Suppose $k\geq2$. Suppose $n_1, n_2, \ldots, n_k \in \mathbb{N}$ satisfy  $n_1 \leq n_2\leq \cdots \leq n_k$, and $G=K_{n_1,n_2,\ldots, n_k}$.  Then, $\chi_{\ell flex}(G)\leq n_1+\cdots+ n_{k-1} +1$.
\end{thm}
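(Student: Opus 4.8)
The plan is to unwind the definition of the list flexibility number and then argue by a greedy extension that exploits the structure of complete multipartite graphs. First I would pin down $\rho(G)$: for any nonempty $D\subseteq V(G)$ the induced subgraph $G[D]$ is complete multipartite with independence number $\max_i|D\cap V_i|$, so $|D|/\alpha(G[D])=\sum_i|D\cap V_i|/\max_i|D\cap V_i|\le k$, with equality when $D$ picks one vertex from each part; hence $\rho(G)=k$ and $1/\rho(G)=1/k$. Thus it suffices to show $G$ is $(d,1/k)$-flexible with $d:=\col(G)=1+\sum_{i=1}^{k-1}n_i$, i.e.\ that for every $d$-assignment $L$ and every request $r$ with domain $D$ there is a proper $L$-coloring satisfying at least $|D|/k$ vertex requests. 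Writing $D_i=D\cap V_i$, note $\max_i|D_i|\ge|D|/k$.

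The key mechanism is: fix a partial proper $L$-coloring on a set $W$, then color the remaining vertices part by part with the largest part $V_k$ colored last. An uncolored vertex of a part $V_i$ with $i<k$ then has forbidden-color count at most (the colors already placed on $W$) plus (the sizes of the already-colored smaller parts), while an uncolored vertex of $V_k$ has forbidden-color count at most $|V_1\cup\cdots\cup V_{k-1}|=d-1$; since $d-1=\sum_{i<k}n_i$, the bookkeeping closes whenever $W$ does not push too many distinct colors onto a small part. This settles three families of cases. (i) If $|D_j|\ge|D|/k$ for some $j\le k-1$, prescribe $f=r$ on the independent set $D_j$ and extend (a small-part vertex sees at most $\sum_{i<k}n_i-n_i<d$ colors, a $V_k$ vertex at most $d-1$). (ii) If $|D_k|\ge|D|/k$ and the requests in $D_k$ use at most $n_{k-1}$ distinct colors, prescribe $f=r$ on all of $D_k$ and extend; more generally, prescribing $f=r$ on \emph{any} set of at most $n_{k-1}$ vertices of $D_k$ always extends. (iii) If there is an $r$-proper set $S\subseteq D_1\cup\cdots\cup D_{k-1}$ with $|S|\ge|D|/k$, prescribe $f=r$ on $S$ and extend. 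Assigning each color to the small part (among $V_1,\dots,V_{k-1}$) where it is requested most often shows (iii) applies whenever $|D_k|\le|D|/k$, since then one gets an $r$-proper set of size at least $\tfrac{1}{k-1}\sum_{i<k}|D_i|=\tfrac{|D|-|D_k|}{k-1}\ge|D|/k$; and the last remark of (ii) disposes of $|D|\le k\,n_{k-1}$, by prescribing $\lceil|D|/k\rceil\le n_{k-1}$ vertices of $D_k$.

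After these reductions the only remaining case is: $|D|>k\,n_{k-1}$, $|D_k|>|D|/k$, $|D_i|<|D|/k$ for all $i<k$, and the requests in $D_k$ use more than $n_{k-1}$ distinct colors; here no whole part can be prescribed. My plan is to color $V_1,\dots,V_{k-1}$ first so that the set $C$ of colors it uses blocks as few requests of $D_k$ as possible, then color $V_k$, setting $f(v)=r(v)$ for every $v\in D_k$ with $r(v)\notin C$ (each $V_k$ vertex still has a free color, as it sees at most $d-1$ colors). The number of satisfied requests is then at least (requests satisfied on $V_1\cup\cdots\cup V_{k-1}$) $+\;|D_k|-\sum_{c\in C}|\{v\in D_k:r(v)=c\}|$, and since $|D_k|>|D|/k$ it suffices to keep the blocked total at most $|D_k|-\lceil|D|/k\rceil$. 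I would color $V_1,\dots,V_{k-1}$ part by part, each vertex choosing, among its at least $n_{k-1}+1$ still-available colors, one that newly blocks as few requests of $D_k$ as possible (breaking ties toward its own request), and argue that the blocked total stays within budget using the hypotheses that $D_k$'s requests are spread over more than $n_{k-1}$ colors (so few color classes are large) and that $|D_i|<|D|/k$ for every $i<k$.

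The main obstacle is exactly this last step. List constraints can force a small-part vertex onto a heavy color class of $D_k$, and making the accounting tight enough to beat $|D_k|-\lceil|D|/k\rceil$ --- rather than a bound losing a constant or logarithmic factor --- is delicate; this is where a connection to list coloring of complete bipartite graphs with unequal list sizes surfaces, and I expect the hard case to require a careful, partly case-driven argument. Everything else --- the value of $\rho(G)$, and cases (i)--(iii) together with the two small reductions --- is routine degree counting once $V_k$ is placed last.
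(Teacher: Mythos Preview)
Your reductions (i)--(iii) are correct, but the proposal has a genuine gap: you explicitly leave the ``remaining case'' ($|D|>k\,n_{k-1}$, $|D_k|>|D|/k$, $|D_i|<|D|/k$ for $i<k$, many distinct colors requested in $D_k$) unfinished, and the greedy scheme you sketch does not close. Concretely, when you color $V_{k-1}$ last among the small parts, a vertex there has only $n_{k-1}+1$ available colors, and an adversary can arrange lists so that every available color blocks roughly $|D_k|/(n_{k-1}+1)$ requests in $D_k$; summing naively over the $s=\sum_{i<k}n_i$ small-part vertices gives a blocked total on the order of $s\,|D_k|/n_{k-1}$, which can far exceed your budget $|D_k|-|D|/k$. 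A single greedy pass with local ``least-blocking'' choices does not obviously overcome this, because different vertices' choices interact through the list constraints.

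The paper sidesteps the difficulty by an averaging argument over $k$ explicit colorings rather than one greedy coloring. Sort the colors $c_1,c_2,\ldots$ appearing in small-part lists by decreasing request-frequency in $D_k$. In the coloring $f$, every small-part vertex takes the \emph{highest-index} color in its (size-$(s+1)$) list, guaranteeing index $\ge s+1$; thus $f$ uses at most $q\le s$ distinct colors, all with small frequency, and blocks at most $\sum_{i=s+1}^{s+q}C(c_i)$ requests in $D_k$. For each $j<k$, the coloring $g_j$ satisfies every request in $D_j$, then colors the remaining small parts from $L(x)$ minus a carefully chosen set of size $\le s$ containing $\{c_{n_j+1},\ldots,c_s\}$; this forces the unused-on-small-parts colors to include at least $s-n_j$ of the moderately popular ones, so $g_j$ satisfies at least $|D_j|+\sum_{i=n_j+1}^{s}C(c_i)$ requests in $D_k$ (plus those requesting colors outside small-part lists). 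Adding the $k$ lower bounds telescopes against $|D|$, forcing one coloring to satisfy $\ge|D|/k$. The idea you are missing is precisely this: rather than one coloring that is good in every case, build $k$ colorings whose guaranteed counts \emph{sum} to at least $|D|$, with the sort-by-popularity trick making the bounds for $f$ and the $g_j$ mesh.
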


Suppose $G=K_{n_1,n_2,\ldots, n_k}$. Note that since $\rho(G)=k$, Theorem~\ref{thm: awesome_sauce} tells us that $\epsilon_{\ell}(G,t) = 1/k$ whenever $t \geq n_1+\cdots+ n_{k-1} +1$.  

In Section~\ref{small} we turn our attention to making progress on Question~\ref{ques: completebipartite}.  Suppose $G=K_{m,n}$ with $1\le m\le n$.  We know that $\epsilon_\ell(G,t) \le 1/\rho(G)=1/2$ whenever $t \geq \chi_{\ell}(G)$, and we know that $\epsilon_\ell(G,t) = 1/2$ whenever $t\ge \chi_{\ell flex}(G)$.  Theorem~\ref{thm: awesome_sauce} tells us $\epsilon_\ell(G,t) = 1/2$ whenever $t\ge \col(G)=m+1$. We use this fact along with some other basic ideas to completely determine $\epsilon_{\ell}(G,t)$ when $G = K_{2,n}$.
\begin{thm} \label{cor: K2n}
Suppose $G = K_{2,n}$.  Then,
\[\epsilon_{\ell}(G,t)=
\begin{cases}
1/2 \text{ if } n=1 \text{ and } t\geq2
\\0 \text{ if } n\in \{2,3\} \text{ and } t=2 
\\1/2 \text{ otherwise}
\end{cases}
.\]
\end{thm}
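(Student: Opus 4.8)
The plan is to peel off the two ``$1/2$'' regimes using results already in hand and to concentrate the work on the single assertion $\epsilon_\ell(K_{2,n},2)=0$ when $n\in\{2,3\}$.

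By the discussion preceding the statement we already know $\epsilon_\ell(K_{2,n},t)\le 1/\rho(K_{2,n})=1/2$ for every $t$ at which the function is defined, and Theorem~\ref{thm: awesome_sauce} gives the matching lower bound $\epsilon_\ell(K_{2,n},t)=1/2$ as soon as $t\ge \col(K_{2,n})$. Since $\col(K_{2,1})=\col(K_{1,2})=2$ and $\col(K_{2,n})=3$ for $n\ge 2$, this yields $\epsilon_\ell(K_{2,1},t)=1/2$ for all $t\ge 2$ and $\epsilon_\ell(K_{2,n},t)=1/2$ for all $n\ge 2$ and $t\ge 3$. Recalling that $\epsilon_\ell(K_{2,n},\cdot)$ is defined only for $t\ge\chi_\ell(K_{2,n})$, which equals $2$ for $n\le 3$ and $3$ for $n\ge 4$, the only $(n,t)$ not yet covered are $n\in\{2,3\}$ with $t=2$ — precisely the second case of the formula.

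For $n\in\{2,3\}$, $K_{2,n}$ is $2$-choosable (by the classical characterization recalled in Section~\ref{basic}), hence $(2,0)$-flexible, so $\epsilon_\ell(K_{2,n},2)\ge 0$; the real content is that $K_{2,n}$ is \emph{not} $(2,\epsilon)$-flexible for any $\epsilon>0$. By Observation~\ref{obs: basic}(ii) applied to the containment $K_{2,2}\subseteq K_{2,3}$, it suffices to prove this for $K_{2,2}=C_4$. I would do so by exhibiting a $2$-assignment together with a one-vertex request that no proper list coloring satisfies. Writing the cycle as $a_1b_1a_2b_2$ (so the partite sets are $\{a_1,a_2\}$ and $\{b_1,b_2\}$), take $L(a_1)=L(b_1)=\{1,2\}$, $L(b_2)=\{1,3\}$, $L(a_2)=\{2,3\}$, and let $r$ have domain $\{a_1\}$ with $r(a_1)=1$. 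If a proper $L$-coloring $f$ had $f(a_1)=1$, then $f(b_1)=2$ and $f(b_2)=3$ would be forced, leaving $a_2$ with no admissible color in $\{2,3\}$; hence $f(a_1)=2$ for every proper $L$-coloring $f$. Since $C_4$ is $2$-choosable such an $f$ exists, and none satisfies the vertex request $(a_1,1)$, so $(C_4,L,r)$ is not $\epsilon$-satisfiable for any $\epsilon>0$. Therefore $\epsilon_\ell(K_{2,2},2)=\epsilon_\ell(K_{2,3},2)=0$, which completes the proof.

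The only step not immediate from earlier results is the construction of this rigid $2$-assignment on $C_4$ — one in which some listed color can never appear at its vertex in any proper coloring (the same phenomenon that makes $C_4$ only barely $2$-choosable) — and I expect that, rather than any of the $1/2$ cases, to be the crux.
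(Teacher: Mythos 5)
Your proposal is correct and follows essentially the same route as the paper: the $1/2$ cases are disposed of via Theorem~\ref{thm: awesome_sauce} together with the Hall-ratio upper bound, and the $n\in\{2,3\}$, $t=2$ case is handled by exhibiting a rigid $2$-assignment with a one-vertex request that no proper list coloring can satisfy. The only cosmetic difference is that the paper gives a single four-color construction covering $n=2$ and $n=3$ at once, whereas you use a three-color gadget on $K_{2,2}$ and lift to $K_{2,3}$ via Observation~\ref{obs: basic}(ii); both are valid.
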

After proving Theorem~\ref{cor: K2n}, we turn our attention to $\epsilon_{\ell}(G,t)$ when $G = K_{3,n}$ and $n \geq 3$.  Specifically, we prove the following result.

\begin{thm} \label{thm: K3n}
Suppose $G = K_{3,n}$ and $n \geq 3$ and $n\in \mathbb{N}$.  Then, the following statements hold. 
\begin{enumerate}[label=(\roman*)]
\item $\epsilon_{\ell}(G,t)= 1/2$ whenever $t\geq 4$.  
\item $\epsilon_{\ell}(G,3)= 1/2$ when $n=3$. 
\item $1/3 \leq \epsilon_{\ell}(G,3)\leq 1/2$ whenever $4 \leq n \leq 6$. 
\item $\epsilon_{\ell}(G,3) = 1/3$ whenever $7 \leq n \leq 8$. 
\item $\epsilon_{\ell}(G,3)= 0$ whenever $9\leq n \leq 26$.
\end{enumerate}
\end{thm}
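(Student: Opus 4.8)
The plan is to handle the five parts by rather different means. First, note that $K_{3,n}$ is $3$-choosable for $n\leq 26$ but never $2$-choosable, so $\chi_{\ell}(K_{3,n})=3$ exactly in the range $3\leq n\leq 26$; this is why $\epsilon_{\ell}(G,3)$ is studied only up to $n=26$ in (ii)--(v). Part (i) is then immediate from the tools already in place: for $n\geq 3$ we have $\rho(K_{3,n})=2$ and $\col(K_{3,n})=4$, so \cref{thm: awesome_sauce} gives $\chi_{\ell flex}(G)\leq 4$, and hence $\epsilon_{\ell}(G,t)=1/\rho(G)=1/2$ for every $t\geq 4$. Part (ii) is a finite verification for $K_{3,3}$: writing a request domain as $D=D_A\cup D_B$ with $D_A=D\cap A$, $D_B=D\cap B$, and using the $A\leftrightarrow B$ symmetry to assume $|D_A|\geq|D_B|$, one tries first to satisfy every request on the larger side; the key point is that a vertex of $B$ carrying a size-$3$ list forbids exactly one colour-triple on $A$ (namely, its own list), so the at most three vertices of $B$ cannot simultaneously block ``satisfy all of $D_A$'', ``satisfy all but one request of $D_A$'', and the relevant mixed strategies, and a case analysis then delivers a proper $L$-colouring satisfying at least $\lceil|D|/2\rceil$ requests.

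The heart of the result is the common lower bound of (iii) and (iv): that $K_{3,n}$ is $(3,1/3)$-flexible whenever $n\leq 8$ (the matching upper bound $\epsilon_{\ell}(G,3)\leq 1/2$ in (iii) being immediate from $\epsilon_{\ell}(G,3)\leq 1/\rho(G)$). Given a $3$-assignment $L$ and a request $r$ with domain $D=D_A\cup D_B$, I would colour $A$ first. For a colouring $\sigma$ of $A$ with colour set $S_\sigma$ (so $|S_\sigma|\leq 3$) that is \emph{valid}, meaning no $b\in B$ has $L(b)\subseteq S_\sigma$, one can extend $\sigma$ to a proper $L$-colouring that gives every $b\in D_B$ with $r(b)\notin S_\sigma$ its requested colour; thus $\sigma$ satisfies $|\{a\in D_A:\sigma(a)=r(a)\}|+|\{b\in D_B:r(b)\notin S_\sigma\}|$ requests, and the task reduces to producing a valid $\sigma$ for which this quantity is at least $|D|/3$. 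Two structural facts are the engine: each $b\in B$ kills only one colour-triple, so there are at most $n\leq 8$ forbidden colour sets; and $|L(a_i)|=3$, so $A$ can dodge any prescribed pair of colours. A counting argument balancing the at most eight forbidden sets against the colour multiplicities of $\{r(b):b\in D_B\}$ then produces the desired $\sigma$; this is exactly where the promised connection surfaces, since the only obstruction to a good $\sigma$ that avoids a given colour is a certificate that some $K_{3,n'}$ fails to be $(2,3)$-choosable, which the asymmetric list-colouring bounds of Alon, Cambie, and Kang exclude in this range.

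The upper bounds in (iv) and (v) come from explicit constructions shaped by the same heuristic. For (iv), take $L(a_1)=\{1,4,5\}$, $L(a_2)=\{2,6,7\}$, $L(a_3)=\{3,8,9\}$, request $r(a_i)=i$, and give $B$ the seven vertices whose lists are $\{1,2,3\}$ together with, for each pair of request colours, the two triples completing it by a non-request colour of the third vertex of $A$ (namely $\{1,2,8\},\{1,2,9\},\{1,3,6\},\{1,3,7\},\{2,3,4\},\{2,3,5\}$); pad with one arbitrary extra vertex when $n=8$. If a proper $L$-colouring had $f(a_i)=i$ and $f(a_j)=j$ for two indices, then whatever colour the third vertex of $A$ receives, some $b\in B$ satisfies $L(b)\subseteq S_A$, a contradiction; since $K_{3,n}$ is $3$-choosable for $n\in\{7,8\}$ a proper colouring exists, and because $|D|=3$ no proper colouring satisfies two requests, giving $\epsilon_{\ell}(G,3)\leq 1/3$. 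For (v), take $L(a_1)=\{1,2,3\}$, $L(a_2)=\{4,5,6\}$, $L(a_3)=\{7,8,9\}$, request $r(a_1)=1$, and give $B$ the nine vertices $b_{i,k}$ with $L(b_{i,k})=\{1,i,k\}$ for $i\in\{4,5,6\}$ and $k\in\{7,8,9\}$, padding with arbitrary extra vertices of $B$ when $10\leq n\leq 26$. If a proper colouring had $f(a_1)=1$, then with $i=f(a_2)$ and $k=f(a_3)$ the vertex $b_{i,k}$ has no available colour; hence $f(a_1)\neq 1$ in \emph{every} proper $L$-colouring. Since $K_{3,n}$ is $3$-choosable for $n\leq 26$, proper colourings exist but none satisfies the single request, so $\epsilon_{\ell}(G,3)=0$.

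The main obstacle is the lower bound of (iii) and (iv): proving $(3,1/3)$-flexibility of $K_{3,n}$ for every $n\leq 8$ requires a careful organisation of how $L$ constrains the colourings of $A$ together with the distribution of the request colours on $B$, and setting up the asymmetric list-colouring input cleanly is the delicate step. The other ingredients are comparatively routine: (i) is a one-line deduction, the constructions for (iv)--(v) work as soon as the list patterns above are chosen, and (ii), though fiddly, is a bounded computation.
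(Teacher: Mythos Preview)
Your treatment of (i) and the upper-bound construction in (iv) matches the paper exactly, down to the same list assignment on $\{a_1,a_2,a_3\}$ and the seven specified $B$-lists. Your case-analysis sketch for (ii) is also in the same spirit as the paper's argument, which splits on $|D|\in\{1,2\}$, $\{3,4\}$, $\{5,6\}$ and uses \cref{thm: 3_2} and \cref{lem: P3} as tools in the small cases.

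For (v) you take a different route. The paper invokes \cref{lem: connect}: since $K_{2,n}$ is not $(3,2)$-choosable for $n\geq 9$ (by \cref{thm: 3_2}), one gets $\epsilon_\ell(K_{3,n},3)=0$ immediately. Your explicit nine-vertex construction with $B$-lists $\{1,i,k\}$ is correct and is essentially a concrete instance of the bad assignment produced inside the proof of \cref{lem: connect}; it is pleasantly self-contained, while the paper's version factors through a reusable lemma.

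The real divergence, and the place where your sketch has a gap, is the lower bound $\epsilon_\ell(K_{3,n},3)\geq 1/3$ for $4\leq n\leq 8$. The paper does \emph{not} argue this directly: it cites Proposition~9 of~\cite{CH25} together with Proposition~14 of~\cite{KM23}. Your proposed framework (choose a ``valid'' colouring $\sigma$ of $A$ and count $|\{a\in D_A:\sigma(a)=r(a)\}|+|\{b\in D_B:r(b)\notin S_\sigma\}|$) is reasonable, but the justification you give is not quite right. You assert that the obstruction to a valid $\sigma$ avoiding a given colour is a failure of $(2,3)$-choosability of some $K_{3,n'}$, and that this is excluded ``in this range''. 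By \cref{thm: 3_2}(ii), however, $K_{3,n}$ is $(2,3)$-choosable only for $n\leq 7$; $K_{3,8}$ is \emph{not}, so the blanket claim fails at $n=8$. One can partially rescue this by noting that any $b$ with the avoided colour $c$ in $L(b)$ is never an obstruction, so the effective $n'$ drops by at least the number of $b$'s requesting $c$; if $c$ is a requested colour this gives $n'\leq 7$. Even so, you still owe an argument that among the valid $\sigma$'s one has $S_\sigma$ missing enough request mass to reach $|D|/3$: avoiding a single most-popular colour need not suffice (e.g.\ request multiplicities $(2,2,2,2)$ on $D_B$ with the resulting $S_\sigma$ equal to the other three request colours yields only two satisfied requests, short of $8/3$). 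So this step is genuinely incomplete; you should either supply the full counting/averaging argument or, as the paper does, appeal to the external results.
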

So, when $G=K_{3,n}$ and $n \geq 3$, we know the value of $\epsilon_{\ell}(G,t)$ unless $n \in \{4,5,6\}$ and $t=3$.  It is also worth mentioning that we give a complete characterization of $(3,2)$-choosable complete bipartite graphs to help with the proof of Theorem~\ref{thm: K3n}.  We state it here since it may be of independent interest.  
\begin{thm} \label{thm: 3_2}
Suppose $G=K_{m,n}$ where $m,n \in \mathbb{N}$ satisfy $m \leq n$.
\begin{enumerate}[label=(\roman*)]
    \item We have $G$ is $(3,2)$-choosable if and only if $m=1$ and $n \in \mathbb{N}$, $m=2$ and $n \leq 8$, $m=3$ and $n \leq 6$, or $m=4$ and $n=4$. 
    \item We have $G$ is $(2,3)$-choosable if and only if $m \in [2]$ and $n \in \mathbb{N}$, $m=3$ and $n \leq 7$, or $m=4$ and $n \leq 5$.
\end{enumerate}
\end{thm}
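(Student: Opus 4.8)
The plan is to recast both statements as a question about set systems and then treat the ``choosable'' and ``not choosable'' halves separately, using throughout the standard monotonicity of $(a,b)$-choosability under passage to complete bipartite subgraphs.

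\textbf{Reduction.} Write $G=K_{m,n}$ with partite sets $A$ of size $m$ and $B$ of size $n$; both are independent. Given any $(3,2)$-assignment $L$ (so $|L(a)|=3$ for $a\in A$ and $|L(b)|=2$ for $b\in B$), every choice of colors $c_a\in L(a)$ already properly colors $G[A]$, and it extends to a proper $L$-coloring of $G$ exactly when $L(b)\not\subseteq\{c_a:a\in A\}$ for all $b\in B$. Hence $G$ is $(3,2)$-choosable if and only if for every family of $3$-sets $(L(a))_{a\in A}$ and $2$-sets $(L(b))_{b\in B}$ there is a transversal $T=\{c_a:a\in A\}$ of $(L(a))_{a\in A}$ with $L(b)\not\subseteq T$ for all $b$; coloring the size-$m$ side first again, now with $2$-element lists, gives the analogous characterization of $(2,3)$-choosability. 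Two observations are then immediate and dispose of the cases $m\le 2$ of (ii): if $m\le 2$ then $|T|\le 2$, so for a $(2,3)$-assignment no $L(b)$ (a $3$-set) can lie inside $T$, and $G$ is $(2,3)$-choosable for every $n$; similarly $K_{1,n}$ is $(3,2)$-choosable for every $n$ since then $|T|\le 1$.

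\textbf{Choosable cases.} By monotonicity it remains to show $K_{2,8}$, $K_{3,6}$, $K_{4,4}$ are $(3,2)$-choosable and $K_{3,7}$, $K_{4,5}$ are $(2,3)$-choosable (the smaller pairs then follow, and $K_{1,n},K_{2,n}$ for all $n$ were handled above). For $K_{2,8}$ and $K_{3,7}$ a short count suffices: if two of the $A$-lists meet, color those two vertices with a common color so that $|T|\le 2$ and no $B$-list can sit inside $T$; otherwise the $A$-lists are pairwise disjoint, so there are exactly $9$ (resp.\ $8$) transversals, of which at most $8$ (resp.\ $7$) contain a $B$-list, leaving a valid transversal. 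For $K_{3,6}$, $K_{4,4}$, and $K_{4,5}$ we run a finite case analysis on the intersection pattern of the $A$-lists: when they are pairwise disjoint we double count pairs $(T,L(b))$ with $L(b)\subseteq T$—a fixed $B$-list meets only a small fraction of all transversals of a disjoint system—which beats the number of $B$-lists; when some $A$-lists overlap, the repeated colors shrink $T$ and cut the effective number of constraints, and a bounded check closes each configuration. This is the point at which we draw on the counting technique of Alon--Cambie--Kang~\cite{AC21}.

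\textbf{Not-choosable cases.} For each pair just outside the stated ranges we exhibit an explicit $L$ with no proper $L$-coloring—equivalently, via the reduction, a family of $A$-lists every transversal of which contains one of the $B$-lists—and monotonicity then handles all larger pairs, including every $K_{m,n}$ with $5\le m\le n$ (using $K_{4,5}\subseteq K_{m,n}$, together with the symmetry of $K_{5,5}$ for the $(2,3)$ statement). For (i) the witnesses are $K_{2,9}$ (take $L(a_1)=\{1,2,3\}$, $L(a_2)=\{4,5,6\}$ and the nine $B$-lists $\{i,j\}$ with $i\in\{1,2,3\}$, $j\in\{4,5,6\}$), $K_{3,7}$ (take the cyclically overlapping triples $\{1,2,3\},\{3,4,5\},\{1,5,6\}$ on $A$ together with a carefully chosen $7$-element family of pairs blocking every transversal—the naive ``all cross-pairs'' family does not suffice here), and $K_{4,5}$ (a similarly structured but more delicate family). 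For (ii) the witnesses are $K_{3,8}$ (pairwise disjoint pairs $\{1,2\},\{3,4\},\{5,6\}$ on $A$ and the eight $B$-lists $\{i,j,k\}$ with $i\in\{1,2\}$, $j\in\{3,4\}$, $k\in\{5,6\}$) and $K_{4,6}$ ($A$-lists $\{1,2\},\{2,3\},\{1,3\},\{4,5\}$ and the six $B$-lists $\{i,j,t\}$ with $\{i,j\}\subseteq\{1,2,3\}$ and $t\in\{4,5\}$); in each case one verifies directly that every transversal of the $A$-lists contains a $B$-list.

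\textbf{Main obstacle.} The reduction and the pure counting are routine; the substantive work is (a) the finite case analysis for $K_{3,6}$, $K_{4,4}$, and $K_{4,5}$ on the choosable side, where the $A$-lists can overlap in many configurations and each must be shown to admit a valid transversal, and (b) locating the tight non-choosability witnesses—above all for $K_{3,7}$ and $K_{4,5}$, which are not the obvious ``take every cross-tuple'' families; producing these assignments and checking that they really block every transversal is, I expect, the crux of the proof.
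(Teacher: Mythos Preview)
Your transversal reformulation and the overall plan are sound, and the approach genuinely differs from the paper's. Where you case-split on the intersection pattern of the $A$-lists and use double counting when they are pairwise disjoint, the paper instead invokes the Small Pot Lemma of Cranston--Rabern (Lemma~\ref{lem: smallpot}) to bound the total palette and then cases on the maximum multiplicity $M$ of a color among the \emph{other} side's lists; for $K_{3,6}$ it also calls an auxiliary result on $K_{3,3}$ with one list of size~$3$ (Lemma~\ref{lem: k_33}). Your disjoint-case double count is cleaner than the paper's route through the Small Pot Lemma, and your organization avoids the extra $K_{3,3}$ lemma; conversely, the paper's multiplicity casing gives uniform short reductions to $2$-choosable subgraphs across $K_{3,6}$, $K_{4,4}$, and $K_{4,5}$. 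On the non-choosable side the paper writes out explicit witnesses for $K_{3,7}$ and $K_{4,5}$ (with different $A$-triples than your $\{1,2,3\},\{3,4,5\},\{1,5,6\}$, though yours does work: the six size-$2$ transversal sets $\{1,3\},\{3,5\},\{3,6\},\{1,4\},\{1,5\},\{2,5\}$ must all be $B$-lists, after which only the size-$3$ transversal $\{2,4,6\}$ is uncovered and any pair inside it suffices as the seventh), and its $K_{4,6}$ witness uses seven colors rather than your five-color construction. Both proofs ultimately reduce to finite checks of comparable length; what your plan still owes is the actual execution of the overlapping-$A$-list configurations for $K_{3,6}$, $K_{4,4}$, $K_{4,5}$ and the explicit $K_{4,5}$ witness you left unspecified.
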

Theorem~\ref{thm: 3_2} and the characterization of 3-choosable graphs mentioned in Subsection~\ref{basic} also allows us to prove the following result.
\begin{pro} \label{pro: forfree}
    Suppose $G=K_{m,n}$ and $m \leq n$. Then, 
    $\epsilon_{\ell}(G,3) = 0$ when $m=4$ and $7 \leq n \leq 20$, $m=5$ and $n \leq 12$, or $m=6$ and $n \leq 10$.  When $m=4$ and $n \leq 6$, $1/|V(G)| \leq \epsilon_{\ell}(G,3) \leq 1/2$.
\end{pro}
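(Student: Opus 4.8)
The plan is to treat the two assertions separately, using Theorem~\ref{thm: 3_2} and the characterization of $3$-choosable complete bipartite graphs recalled in Subsection~\ref{basic} as the only substantive inputs; the rest is bookkeeping.

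For the cases where $\epsilon_{\ell}(G,3)=0$ is asserted, first note that $G=K_{m,n}$ is $3$-choosable in each listed range, so $\epsilon_{\ell}(G,3)$ is defined and $G$ is $(3,0)$-flexible; it therefore suffices to produce one $3$-assignment $L$ and one request $r$ for which no proper $L$-coloring satisfies any vertex request, as this forces $\epsilon_{\ell}(G,3)=0$. I would take $r$ to have a single-vertex domain, where the requested color, once used, collapses the rest of the graph to an uncolorable instance. Let $A$ be the part of $G$ of size $m$, pick $w$ in the other part, so $G-w\cong K_{m,n-1}$ (which is $K_{m-1,m}$ when $n=m$, that is, for $K_{5,5}$ and $K_{6,6}$). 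By Theorem~\ref{thm: 3_2}, in each listed range the relevant choosability of $G-w$ fails: $K_{4,n-1}$ with $n-1\ge 6$, $K_{5,n-1}$ with $n-1\ge 5$, and $K_{6,n-1}$ with $n-1\ge 6$ are not $(2,3)$-choosable, while $K_{4,5}$ and $K_{5,6}$ are not $(3,2)$-choosable; so there is a list assignment $M$ for $G-w$ assigning lists of size $2$ to the vertices of $A$ and size $3$ to all other vertices with no proper $M$-coloring, and we may assume $M$ uses none of three fixed colors $0,x,y$. Define $L$ on $G$ by $L(v)=M(v)\cup\{0\}$ for $v\in A$, $L(u)=M(u)$ for $u\notin A\cup\{w\}$, and $L(w)=\{0,x,y\}$, and let $r$ have domain $\{w\}$ with $r(w)=0$. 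Then $G$ is $L$-colorable (color $w$ with $x$ and color $G-w$, now $K_{m,n-1}$ with all lists of size $3$, using $3$-choosability, which holds throughout the listed ranges), while any proper $L$-coloring $f$ with $f(w)=0$ forces every $v\in A$ to avoid $0$, so $f$ restricted to $G-w$ is a proper $M$-coloring of $G-w$, contradicting the choice of $M$. Hence no proper $L$-coloring satisfies $r$, and since $|D|=1$ this shows $G$ is not $(3,\epsilon)$-flexible for any $\epsilon>0$.

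For $G=K_{4,n}$ with $n\le 6$, the bound $\epsilon_{\ell}(G,3)\le 1/2$ is immediate from $\rho(G)=2$. For $\epsilon_{\ell}(G,3)\ge 1/|V(G)|$, since $|D|\le|V(G)|$ for every request it suffices to show that for every $3$-assignment $L$ and every request $r$ with domain $D$ some proper $L$-coloring satisfies at least one vertex request, which gives that $G$ is $(3,1/|V(G)|)$-flexible. Pick $v_0\in D$, put $c_0=r(v_0)$, color $v_0$ with $c_0$, and delete $v_0$: the neighbors of $v_0$ lose at most the color $c_0$ from their lists, so they retain lists of size $\ge 2$, while every other vertex retains a list of size $3$. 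If $v_0$ lies in the part of size $4$ the graph left is $K_{3,n}$, which is $(3,2)$-choosable for $n\le 6$ by Theorem~\ref{thm: 3_2}; if $v_0$ lies in the part of size $n$ the graph left is $K_{4,n-1}$ (and $K_{3,4}$ when $n=4$), which is $(2,3)$-choosable, respectively $(3,2)$-choosable, for $n\le 6$ by Theorem~\ref{thm: 3_2}. Since list-colorability is preserved under enlarging lists, the remaining graph can be properly colored from the lists above, and together with the color $c_0$ on $v_0$ this yields a proper $L$-coloring of $G$ satisfying the request at $v_0$.

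The only delicate point is the bookkeeping: one has to match the list sizes produced by removing a requested color to the exact ranges in Theorem~\ref{thm: 3_2}, and in the first assertion treat the corner $n=m$ (where $G-w\cong K_{m-1,m}$, so one invokes failure of $(3,2)$-choosability rather than of $(2,3)$-choosability). I do not expect any conceptual obstacle beyond assembling these facts.
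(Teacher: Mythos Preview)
Your proof is correct and follows the same underlying idea as the paper's: precolor one vertex with its requested color and reduce to an asymmetric choosability question on the complete bipartite graph that remains, invoking Theorem~\ref{thm: 3_2}. The paper packages exactly this reduction as Lemma~\ref{lem: connect} and simply cites it; for the $\epsilon_\ell(G,3)=0$ cases it removes the distinguished vertex from the \emph{smaller} part, so the relevant obstruction is uniformly that $K_{m-1,n}$ fails to be $(3,2)$-choosable, which dispenses with your $n=m$ corner cases. Your side verification that $G$ is $L$-colorable under the bad list assignment is redundant (it is automatic from $3$-choosability of $G$) but harmless.
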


\section{Complete Multipartite Graphs} \label{Big}

Suppose $n_1, n_2, \ldots, n_k \in \mathbb{N}$ satisfy  $n_1 \leq n_2\leq \cdots \leq n_k$, and $G=K_{n_1,n_2,\ldots, n_k}$.  It is easy to prove that $\rho(G) = k$.  We are now ready to present a proof of Theorem~\ref{thm: awesome_sauce}.

\begin{proof}
The proof is by induction on $k$.  Throughout the proof suppose  the partite sets of $G$ are $X_1, X_2, \ldots, X_k$, and suppose $X_1 = \{x_1, \ldots, x_{n_1}\}$.

We begin by proving the result when $k=2$.  Suppose $L$ is an arbitrary $(n_{1}+1)$-assignment of $G$ and $r$ is a request of $L$ with nonempty domain $D$.  Let $ D_i = X_{i} \cap D $ for each $i\in [2]$, and let $P = \bigcup_{j=1}^{n_1} L(x_{j})$.  Let $C : P  \rightarrow (\N \cup \{0\})$ be the function given by $C(z) = |r^{-1}(z) \cap D_{2}|$.  Name the colors of $P$ so that $ P = \{c_1,\ldots,c_p\}$ and $C(c_1) \geq C(c_2) \geq \cdots \geq C(c_p)$. Note that $n_1 +1 \leq p \leq n_1(n_1+1)$, and let $W = D_{2} - r^{-1}(P)$.  

Now, we will construct two proper $L$-colorings of $G$: $f$ and $g$, and we will show that at least one of these proper $L$-colorings satisfies at least $|D|/2$ of the vertex requests.  We construct $f$ as follows. For each $j \in[n_{1}]$, color $x_{j}$ with the element of $L(x_{j}) \cap \{c_{n_1+1},\ldots,c_p\}$ of highest index (this is possible since $|L(x_j)|=n_1+1$).  Finally, complete $f$ by greedily coloring the vertices in $X_{2}$ in any order, satisfying vertex requests when possible. Since no colors in $r(W) \cup \{c_1,\dots,c_{n_{1}}\}$ were used to color the vertices in $X_1$, $f$ satisfies at least $|W|+ \sum_{i=1}^{n_1} C(c_i)$ vertex requests.

Construct $g$ as follows. For each $x\in D_{1}$, let $g(x) = r(x)$. For each $x\in X_1- D_1$, let $g(x)$ be an element of $L(x)$. Complete $g$ by greedily coloring the vertices in $X_2$ in any order, satisfying vertex requests when possible. Clearly, $g$ satisfies $|W|+ |D_{1}| + \sum_{c \in P-g(X_1)} C(c)$ vertex requests which means $g$ satisfies at least $|W|+ |D_{1}| + \sum_{i=n_1+1}^{p} C(c_i)$ vertex requests.  

For the sake of contradiction, assume $|W|+ |D_{1}| + \sum_{i=n_1+1}^{p} C(c_i)<|D|/2$ and  $|W|+ \sum_{i=1}^{n_1} C(c_i) <|D|/2$. Since $|W|+ |D_1|+ \sum_{i=1}^p C(c_i)= |D|$, adding these inequalities yields $|W|< 0$ which is a contradiction. This means $f$ or $g$ is a proper $L$-coloring of $G$ that satisfies at least $|D|/2$ of the vertex requests.  Consequently, $\chi_{\ell flex}(G) \leq n_1+1$.

Next, assume $k \geq 3$ and the desired result holds for all integers greater than one and less than $k$.  Suppose $L$ is an arbitrary $(n_{1}+\cdots+ n_{k-1} +1)$-assignment of $G$ and $r$ is a request of $L$ with nonempty domain $D$.   Let $s=\sum_{j=1}^{k-1} n_{j}$, $D_i = X_{i} \cap D $ for each $i\in [k]$, and $P = \bigcup_{j=1}^{n_1} L(x_{j})$.  Let $D' = \bigcup_{j=2}^k D_j$, and let $C : P  \rightarrow (\N \cup \{0\})$ be the function given by $C(z) = |r^{-1}(z) \cap D'|$.  Name the colors of $P$ so that $P = \{c_1,\ldots,c_p\}$ and $C(c_1) \geq C(c_2) \geq \cdots \geq C(c_p)$. Note that $s +1 \leq p \leq n_1(s+1)$, and let $W = D' - r^{-1}(P)$. 

Now, we will construct two proper $L$-colorings of $G$: $f$ and $g$, and we will show that at least one of these proper $L$-colorings satisfies at least $|D|/k$ of the vertex requests.  We construct $f$ as follows. For each $j \in[n_{1}]$, color $x_{j}$ with the element of $L(x_{j}) \cap \{c_{s+1},\ldots,c_p\}$ of highest index.  Let $z^{(j)} = f(x_{j})$ for each $j \in [n_1]$.  Suppose $Z=\{z^{(j)}: j \in [n_1]\}$, and let $z=|Z|$. Note that $z\in[n_1]$. Since $Z \subseteq \{c_{s+1},\ldots,c_p\}$, $s+z\leq p$.  Next, let $G' = G - X_1$ and $L'(v) = L(v) - Z$ for each $v \in V(G')$.  Also let $r'$ be the function $r$ with its domain restricted to $D' - r^{-1}(Z)$.  Clearly $r'$ is a request of $L'$, $G'= K_{n_2,\dots,n_k}$, and $|L'(v)| \geq s + 1 - n_1 = n_2+\cdots+n_{k-1} + 1$ for each $v \in V(G')$.

So, the induction hypothesis tells us that we can complete $f$ according to a proper $L'$-coloring of $G'$ that satisfies at least  $|D' - r^{-1}(Z)|/(k-1)$ of the vertex requests of $r'$.  Note $\left|D' - r^{-1}(Z)\right| = |W|+\sum_{i=1}^p C(c_i) - \sum_{z \in Z} C(z)$.  By the definition of $Z$ we may suppose $Z = \{c_{a_1}, \dots, c_{a_z} \}$ where $s+1 \leq a_1 < \cdots < a_z \leq p$.  Then, since $a_i \geq s+i$ for each $i \in [z]$, $C(c_{a_i}) \leq C(c_{s+i})$ for each $i \in [z]$.  This means
$$|W|+\sum_{i=1}^p C(c_i) - \sum_{z \in Z} C(z) \geq |W|+\sum_{i=1}^p C(c_i) - \sum_{i=s+1}^{s+z} C(c_i).$$
It follows that $f$ satisfies at least
$$\frac{1}{k-1} \left( |W|+\sum_{i=1}^p C(c_i) - \sum_{i=s+1}^{s+z} C(c_i) \right)$$
vertex requests of $r$.

Construct $g$ as follows. For each $x\in D_{1}$, let $g(x) = r(x)$. For each $x\in X_1- D_1$, let $g(x)$ be an element of $L(x)$.  As before, let $G' = G - X_1$.  Let $L''(v) = L(v) - g(X)$ for each $v \in V(G')$, and let $r''$ be the function $r$ with its domain restricted to $D' - r^{-1}(g(X))$.  Clearly $r''$ is a request of $L''$, $G'= K_{n_2,\dots,n_k}$, and $|L'(v)| \geq s + 1 - n_1 = n_2+\cdots+n_{k-1} + 1$ for each $v \in V(G')$.

So, the induction hypothesis tells us that we can complete $g$ according to a proper $L''$-coloring of $G'$ that satisfies at least  $|D' - r^{-1}(g(X))|/(k-1)$ of the vertex requests of $r''$.  Since
$$\left|D' - r^{-1}(g(X))\right| = |W|+\sum_{i=1}^p C(c_i) - \sum_{c \in g(X)} C(c) \geq |W|+\sum_{i=n_1+1}^p C(c_i),$$
it follows that $g$ satisfies at least
$$|D_1| + \frac{1}{k-1} \left( |W|+\sum_{i=n_1+1}^p C(c_i) \right)$$
vertex requests of $r$.

For the sake of contradiction, assume 
$$\frac{1}{k-1} \left( |W|+\sum_{i=1}^p C(c_i) - \sum_{i=s+1}^{s+z} C(c_i) \right)<|D|/k$$ and  
$$|D_1| + \frac{1}{k-1} \left( |W|+\sum_{i=n_1+1}^p C(c_i) \right) <|D|/k.$$ 
Since $|W|+ |D_1|+ \sum_{i=1}^p C(c_i)= |D|$, rearranging these inequalities gives
$$(k-1)|D_1| > |W| + \sum_{i=1}^p C(c_i) - k \sum_{i=s+1}^{s+z} C(c_i) \geq \sum_{i=1}^p C(c_i) - k \sum_{i=s+1}^{s+z} C(c_i) $$
and
$$(k-1)|D_1| < \frac{-|W|}{k-1} - \frac{k}{k-1}\sum_{i=n_1+1}^p C(c_i)+\sum_{i=1}^p C(c_i) \leq - \frac{k}{k-1}\sum_{i=n_1+1}^p C(c_i)+\sum_{i=1}^p C(c_i).$$
These inequalities imply that 
$$\frac{k}{k-1}\sum_{i=n_1+1}^p C(c_i) -  k \sum_{i=s+1}^{s+z} C(c_i) < 0$$
which implies that $\sum_{i=n_1+1}^p C(c_i) - (k-1)\sum_{i=s+1}^{s+z} C(c_i) < 0$.  Now, for each $j \in [k-1]$, let $s_j = \sum_{t=1}^j n_t$ which means $s_{k-1}=s$, and note that
\begin{align*}
\sum_{i=n_1+1}^p C(c_i) - (k-1)\sum_{i=s+1}^{s+z} C(c_i) &\geq \sum_{i=n_1+1}^{s+z} C(c_i) - (k-1)\sum_{i=s+1}^{s+z} C(c_i) \\
&=\sum_{i=n_1+1}^{s} C(c_i) - (k-2)\sum_{i=s+1}^{s+z} C(c_i) \\
&= \sum_{j=1}^{k-2} \sum_{i=1 + s_j}^{s_{j+1}} C(c_i) - (k-2)\sum_{i=s+1}^{s+z} C(c_i) \\
&= \sum_{j=1}^{k-2} \left(\sum_{i=1 + s_j}^{s_{j+1}} C(c_i) - \sum_{i=s+1}^{s+z} C(c_i) \right). 
\end{align*}
Finally, for each $j \in [k-2]$, note that $\sum_{i=1 + s_j}^{s_{j+1}} C(c_i)$ has $n_{j+1}$ terms while $\sum_{i=s+1}^{s+z} C(c_i)$ has $z$ terms, and we know $z \leq n_1 \leq n_{j+1}$.  Also, $C(c_{n_1+1}) \geq C(c_{n_1+2}) \geq \cdots \geq C(c_{s+z})$.  Consequently,
$$\sum_{j=1}^{k-2} \left(\sum_{i=1 + s_j}^{s_{j+1}} C(c_i) - \sum_{i=s+1}^{s+z} C(c_i) \right) \geq 0$$
which is a contradiction.

Thus, $f$ or $g$ is a proper $L$-coloring of $G$ that satisfies at least $|D|/k$ vertex requests of $r$, and the induction step is complete.  
\end{proof}

\section{The List Flexibility Function} \label{small}

We begin this section by showing that the bound in Theorem~\ref{thm: awesome_sauce} can be strict.

\begin{pro} \label{thm: KNN}
For $G=K_{n,n}$ with $n \geq 3$, we have $\chi_{\ell flex}(G) \leq n < n+1 = \col(G)$.
\end{pro}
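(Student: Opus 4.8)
The plan is to prove the equivalent statement that $K_{n,n}$ is $(n,1/2)$-flexible; since $\rho(K_{n,n})=2$ this gives $\chi_{\ell flex}(K_{n,n})\le n$, and $n<n+1=\col(K_{n,n})$ is immediate. So I would fix an $n$-assignment $L$ of $G=K_{n,n}$ with parts $A$ and $B$ and a request $r$ with domain $D$, and write $D_A=D\cap A$, $D_B=D\cap B$. By the symmetry between $A$ and $B$ I may assume $|D_A|\ge|D_B|$, so $|D_A|\ge|D|/2$; the main objective is then to build a proper $L$-coloring satisfying \emph{every} request in $D_A$, falling back on a controlled sacrifice of one request only when that turns out to be impossible.

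The mechanism I would exploit is that once all of $A$ has been colored using color set $S_A$, a proper $L$-coloring of $G$ extending it is obtained by greedily coloring $B$, and this succeeds unless some $b\in B$ has $L(b)=S_A$ — which cannot happen when $|S_A|\le n-1$, and otherwise fails only when $S_A$ lies in the $n$-element family $\{L(b):b\in B\}$. So I would color each $a\in D_A$ by $r(a)$, set $R_A=\{r(a):a\in D_A\}$, and choose $f(a)\in L(a)$ for the remaining $a\in A\setminus D_A$ so that $S_A:=R_A\cup\{f(a):a\in A\setminus D_A\}$ has size at most $n-1$ (or at least avoids that family). Reusing a color of $R_A$ whenever $L(a)$ meets $R_A$, and merging two lists whenever they meet each other, a short argument shows this can be arranged unless we are in a rigid \emph{spread} configuration: the requests on $D_A$ are pairwise distinct, each $a\in A\setminus D_A$ has $L(a)$ disjoint from $R_A$, and the lists $\{L(a):a\in A\setminus D_A\}$ are pairwise disjoint (this includes the case $D_A=A$, where $S_A=R_A$ is forced). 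In a spread configuration $S_A$ must be $R_A$ together with one color chosen from each of the $n-|R_A|$ pairwise disjoint lists $L(a)$, $a\in A\setminus D_A$; there are $n^{\,n-|R_A|}$ such choices and each $b\in B$ rules out at most one, so whenever $n-|R_A|\ge2$, i.e.\ $|D_A|\le n-2$, some choice makes $S_A$ avoid $\{L(b):b\in B\}$ and all of $D_A$ is satisfied.

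This leaves the spread configuration with $|D_A|\in\{n-1,n\}$ as the delicate case, and this is where $n\ge3$ enters. Every list $L(b)$ that can obstruct the extension must contain $R_A$, so sacrificing a single request $a^\ast\in D_A$ and recoloring $a^\ast$ with a color of $L(a^\ast)\setminus R_A$ forces $|S_A\cap R_A|<|R_A|$, hence $S_A$ differs from every obstructing $L(b)$, the greedy extension to $B$ goes through, and $|D_A|-1$ requests on $A$ are satisfied. A color in $L(a^\ast)\setminus R_A$ exists automatically when $|R_A|\le n-1$; when $|R_A|=n$ one first picks $a^\ast$ with $L(a^\ast)\ne R_A$, or, if every vertex of $A$ has list exactly $R_A$, instead colors $A$ with $R_A$ minus one color to free a color for every $b\in B$ with $L(b)=R_A$. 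Since $|D_B|\le|D_A|$, having $|D_A|-1$ satisfied requests already reaches $|D|/2$ once $|D_B|\le|D_A|-2$.

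The remaining near-symmetric regime $|D_B|\in\{|D_A|-1,|D_A|\}$ is the step I expect to be the main obstacle: here (in particular when $D=V(G)$, $R_A=L(b)$ for some $b\in B$, and $\{r(b):b\in D_B\}=L(a)$ for some $a\in A$) one must secure $|D_A|-1$ satisfied requests on one side together with at least one on the other. I would track the freedom that remains — which request $a^\ast$ to drop, and which colors to place on $A\setminus D_A$ and on $a^\ast$ — and show that for $n\ge3$ it always suffices to pick up one further satisfied request while greedily coloring $B$; if this ever fails, one reruns the entire argument with $A$ and $B$ interchanged. That $n\ge3$ is genuinely necessary is confirmed by Theorem~\ref{cor: K2n}, which gives $\epsilon_\ell(K_{2,2},2)=0$, so $\chi_{\ell flex}(K_{2,2})=\col(K_{2,2})=3$.
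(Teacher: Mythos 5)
Your setup is sound as far as it goes, and it parallels the paper's strategy: the easy regime (your $|D_A|\le n-2$, the paper's $|D|\le 2n-4$) falls to a counting/degree-choosability argument, and the rigid obstruction you call a spread configuration is exactly the list structure the paper extracts via Lemma~\ref{lem: P3}. But there is a genuine gap exactly where you flag ``the main obstacle'': the near-symmetric regime $|D_B|\in\{|D_A|-1,|D_A|\}$ with $|D_A|\in\{n-1,n\}$. There, satisfying $|D_A|-1$ requests on the $A$ side does not reach $\lceil |D|/2\rceil$, and you must in addition secure at least one satisfied request on the $B$ side. You assert that ``tracking the freedom that remains'' always yields one for $n\ge 3$, but you give no argument, and the fallback of rerunning the argument with $A$ and $B$ interchanged is not justified either --- nothing you have written rules out both orientations failing simultaneously. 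This is not a routine verification: it is the bulk of the paper's proof. For $|D|\in\{2n-1,2n\}$ the paper commits \emph{first} to $f(y_1)=r(y_1)$ on the $B$ side, then tries to satisfy $n-1$ requests on the $A$ side, uses Lemma~\ref{lem: P3} to pin down the unique obstructing structure ($L(y_i)=\{c_1,\ldots,c_{n-1},d_i\}$ and $L(x_n)=\{r(y_1),d_2,\ldots,d_n\}$ with the $d_i$ pairwise distinct), and then performs two different explicit recolorings according to whether some $r(y_k)$ lies in $\{c_1,\ldots,c_{n-1}\}$; one of these recolorings needs $n\ge 4$. Your sketch never engages with this interaction between the dropped request on one side and the gained request on the other.

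Two smaller points. First, your ``sacrifice one request'' step for $|R_A|=n$ (recolor $a^\ast$ outside $R_A$ so that $S_A\cap R_A$ is proper) only guarantees $S_A\ne L(b)$ for those $b$ with $L(b)\supseteq R_A$, i.e.\ $L(b)=R_A$; you still need $|S_A|\le n-1$ or a check against the other lists $L(b)$, which your phrasing glosses over. Second, the case $n=3$ is not covered by your counting in the delicate regime (the paper routes it separately through Theorem~\ref{thm: K3n}(ii)), so you would need to verify that your final case closes there as well.
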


It is worth mentioning that for large $n$, one may expect a much stronger upper bound for $\chi_{\ell flex}(K_{n,n})$ to hold.  In fact, it is unknown whether $\chi_{\ell flex}(K_{n,n}) = O(\log_2(n))$ as $n \rightarrow \infty$ (see the discussion surrounding Conjectures 15 and 16 in~\cite{KM23}).

Before we prove Proposition~\ref{thm: KNN}, we need two results.  The first is a well-known list coloring result.  A graph $G$ is said to be \emph{degree-choosable} if a proper $L$-coloring of $G$ exists whenever $L$ is a list assignment for $G$ satisfying $|L(v)| = d(v)$ for each $v \in V(G)$.

\begin{thm} [\cite{B77, ET79}] \label{thm: degree}
Every connected graph $G$ is degree-choosable unless each block of $G$ is a complete graph or an odd cycle.  Consequently, any complete bipartite graph with both partite sets of size at least two is degree-choosable. 
\end{thm}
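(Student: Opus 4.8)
The plan is to prove the nontrivial direction that the wording asserts: if $G$ is connected and \emph{not} a Gallai tree (i.e.\ not every block of $G$ is a complete graph or an odd cycle), then $G$ is degree-choosable. (The converse, that Gallai trees fail to be degree-choosable, is the easy direction: assign the repeated list $[n-1]$ to a complete block $K_n$ and a known bad $2$-list to each odd-cycle block, then glue; I would mention it but not dwell on it.) The workhorse for the whole argument is an elementary \emph{slack lemma}: if $H$ is connected, $|L(v)| \ge d_H(v)$ for every vertex, and $|L(u)| > d_H(u)$ for at least one $u$, then $H$ is $L$-colorable. I would prove this by ordering $V(H)$ as a reverse breadth-first search rooted at $u$, so $u$ is colored last and every other vertex is colored before some neighbor lying one step closer to $u$; coloring greedily, each $v \neq u$ then sees at most $d_H(v)-1$ colored neighbors and $u$ sees at most $d_H(u) < |L(u)|$, so a free color always exists.

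Next I would extract a degree-choosable \emph{induced core}. Since $G$ is not a Gallai tree, some block $B$ is $2$-connected and is neither a complete graph nor an odd cycle, and here I invoke \emph{Rubin's structural lemma}: such a block contains an even cycle $C$ having at most one chord in $G$. Taking $H = G[V(C)]$, the core is by construction an \emph{induced} subgraph of $G$ that is either an even cycle $C_{2k}$ or $C_{2k}$ plus a single chord. Working with $G[V(C)]$ rather than an abstract subgraph is the crucial point that correctly excludes complete graphs such as $K_4$, whose only even cycles carry two chords in the ambient graph.

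I then need the core to be degree-choosable. An even cycle is $2$-choosable, hence degree-choosable. For $C_{2k}$ with one chord $uv$, the endpoints $u,v$ have degree $3$ (lists of size $3$) and all other vertices have degree $2$ (lists of size $2$); here I would fix distinct colors $c_u \in L(u)$, $c_v \in L(v)$ and then color the two internal paths joining $u$ and $v$, using the freedom in the size-$3$ lists to avoid the one obstruction that can arise at a path endpoint — a short direct casework. With the degree-choosable induced core $H$ in hand, I finish by coloring $G$ in two stages. First color $G - V(H)$: each of its components $K$ sends an edge to $V(H)$ (as $G$ is connected), so some vertex of $K$ has $|L(\cdot)| = d_G(\cdot) > d_K(\cdot)$, and the slack lemma colors every component. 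Second, color $H$ last: because $H$ is induced, each $v \in V(H)$ has exactly $d_G(v) - d_H(v)$ already-colored neighbors outside $H$, leaving a residual list of size at least $d_H(v)$, and degree-choosability of $H$ finishes. The main obstacle is Rubin's lemma itself — the $2$-connected structural statement, which needs an ear-decomposition / connectivity argument — together with the small verification that an even cycle with one chord is degree-choosable; by contrast the slack lemma and the two-stage extension are routine.

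Finally, for the stated consequence I would observe that $K_{m,n}$ with $m,n \ge 2$ is itself a single $2$-connected block that is not a complete graph (two vertices in the same part are nonadjacent) and not an odd cycle (it is bipartite). Hence it is not a Gallai tree, and the theorem gives degree-choosability; equivalently, $K_{2,2} = C_4 \subseteq K_{m,n}$ already supplies an induced even-cycle core to which the same extension applies.
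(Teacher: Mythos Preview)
The paper does not prove this theorem at all: it is stated as a cited result from \cite{B77, ET79} and used as a black box (specifically, inside the proof of Proposition~\ref{thm: KNN}). There is therefore no ``paper's own proof'' to compare against.

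That said, your proposal is the standard Erd\H{o}s--Rubin--Taylor argument and is correct in outline: the slack lemma, Rubin's structural lemma locating an induced even cycle with at most one chord inside a non-Gallai block, degree-choosability of that core, and the two-stage extension back to $G$. The one place deserving a little more care is the chorded-even-cycle case: simply fixing arbitrary distinct $c_u\in L(u)$, $c_v\in L(v)$ can fail on one of the two $u$--$v$ paths if both its interior endpoints are forced; the usual fix is to first pick $c_u\in L(u)\setminus L(w)$ for a neighbor $w$ of $u$ on the cycle (or to exploit that at least one of the two paths has even length), and your phrase ``a short direct casework'' is where that belongs. The deduction for $K_{m,n}$ with $m,n\ge 2$ is exactly right.
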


Now, we prove a lemma related to the degree-choosability of stars.

\begin{lem} \label{lem: P3}
Suppose $G=K_{1,n}$ and the bipartition of $G$ is $X=\{x\}$, $Y=\{y_1,\ldots,y_n\}$. Suppose $L$ is a list assignment for $G$ such that $|L(x)| \geq n$ and $|L(y_i)| \geq 1$ for all $i \in [n]$. Then, $G$ is not $L$-colorable if and only if  $|L(y_i)|=1$ for all $i \in [n]$, $L(y_1),\ldots,L(y_n)$ are pairwise disjoint, and $L(x)=\bigcup_{i \in [n]} L(y_i)$.
\end{lem}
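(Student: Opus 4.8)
The plan is to prove the two implications separately, using the elementary observation that a proper $L$-coloring of the star $K_{1,n}$ is nothing more than a choice of colors $c_i \in L(y_i)$ for each $i \in [n]$ together with a color of $L(x)$ distinct from all the $c_i$. Equivalently, $G$ fails to be $L$-colorable precisely when, for \emph{every} selection $c_i \in L(y_i)$ ($i \in [n]$), one has $L(x) \subseteq \{c_1,\ldots,c_n\}$. This reformulation will do almost all of the work.

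For the ``if'' direction, assume the three right-hand conditions. Then each leaf $y_i$ is forced to receive the unique color $c_i$ of $L(y_i)$; pairwise disjointness of the lists makes $c_1,\ldots,c_n$ distinct, and $\{c_1,\ldots,c_n\} = L(x)$. Hence no color of $L(x)$ remains available for $x$, so $G$ is not $L$-colorable; this part is immediate. For the ``only if'' direction, suppose $G$ is not $L$-colorable. By the reformulation, for every selection $(c_i)$ we have $L(x) \subseteq \{c_1,\ldots,c_n\}$; since $|L(x)| \ge n$ while $|\{c_1,\ldots,c_n\}| \le n$, this forces (for every such selection) that $|L(x)| = n$, that $c_1,\ldots,c_n$ are pairwise distinct, and that $\{c_1,\ldots,c_n\} = L(x)$.

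The key remaining step, and the only nonroutine one, is to upgrade this to $|L(y_i)| = 1$ for all $i$. Suppose instead some $L(y_j)$ contains distinct colors $a$ and $b$. Fix the other colors $c_i \in L(y_i)$ ($i \ne j$) arbitrarily and set $S = \{c_i : i \ne j\}$. Applying the identity $S \cup \{c_j\} = L(x)$ first with $c_j = a$ and then with $c_j = b$ yields $S \cup \{a\} = S \cup \{b\} = L(x)$; since the full selections are distinct, $|S| = n-1$ and $a,b \notin S$, so $\{a\} = L(x)\setminus S = \{b\}$, a contradiction. Thus each $L(y_i)$ is a singleton $\{c_i\}$, and then pairwise disjointness of the lists together with $L(x) = \bigcup_{i\in[n]} L(y_i)$ follows at once from the distinctness of the $c_i$ and $\{c_1,\ldots,c_n\} = L(x)$. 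I expect no obstacle beyond carefully recording that the ``for every selection'' condition may be applied with all but one of the $c_i$ held fixed; everything else is bookkeeping against the bound $|L(x)| \ge n$.
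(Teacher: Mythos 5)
Your proof is correct and rests on the same elementary observation as the paper's: a proper $L$-coloring of the star is a selection $c_i\in L(y_i)$ together with a color of $L(x)$ avoiding all $c_i$, and noncolorability forces every selection to cover $L(x)$. The only difference is packaging — the paper derives the three conditions one at a time by exhibiting explicit proper colorings in each contradiction case, whereas you extract all three from the single ``every selection equals $L(x)$'' reformulation via a short exchange argument; both are valid and of comparable length.
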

\begin{proof}
The proof is trivial when $n=1$. So, assume $n \geq 2$.  Suppose $G$ is not $L$-colorable.  

For the sake of contradiction, assume that there is $k \in [n]$ such that $|L(y_k)|>1$.  We will obtain a contradiction by constructing a proper $L$-coloring $f$ of $G$.  Color $y_i$ so that $f(y_i) \in L(y_i)$ for all $i \in [n]-\{k\}$. Since $L(x)-\{f(y_i) : i \in [n]-\{k\}\}$ is nonempty, we can color $x$ so that $f(x) \in L(x)-\{f(y_i) : i \in [n]-\{k\}\}$. Since $L(y_k)-\{f(x)\}$ is nonempty, we can complete our proper $L$-coloring by coloring $y_k$ so that $f(y_k) \in L(y_k)-\{f(x)\}$. Having reached a contradiction, we have that $|L(y_i)|=1$ for all $i \in [n]$. 

Now, assume for the sake of contradiction that $L(y_1),\ldots,L(y_n)$ are not pairwise disjoint. Then there are $j,k \in [n]$ such that $j \neq k$ and $L(y_j)=L(y_k)$. We will obtain a contradiction by constructing a proper $L$-coloring $f$ of $G$.  Let $f(y_j)$ and $f(y_k)$ be the element in $L(y_k)$. For each vertex $y \in Y-\{y_j,y_k\}$, let $f(y)$ be the element in $L(y)$. Since $|\{f(y) : y \in Y\}|<n$, we can complete a proper $L$-coloring of $G$ by coloring $x$ with an element in $L(x)-\{f(y) : y \in Y\}$. Having reached a contradiction, we have that $L(y_1),\ldots,L(y_n)$ are pairwise disjoint.

Now, assume for the sake of contradiction that $L(x) \neq \bigcup_{i \in [n]} L(y_i)$.  We will obtain a contradiction by constructing a proper $L$-coloring $f$ of $G$.  Since $L(y_1),\ldots,L(y_n)$ are pairwise disjoint and $|L(y_i)|=1$ for each $i \in [n]$, we may suppose $L(y_i)=\{c_i\}$ so that $\bigcup_{i \in [n]} L(y_i)=\{c_1,\ldots,c_n\}$. Let $f(y_i)=c_i$ for all $i \in [n]$. Since $|L(x)|=n$ and $L(x) \neq \{c_1,\ldots,c_n\}$, note that $L(x)-\{c_1,\ldots,c_n\}$ is nonempty.  We can complete a proper $L$-coloring of $G$ by coloring $x$ with an element of $L(x)-\{c_1,\ldots,c_n\}$.  Having reached a contradiction, we have that $L(x)=\bigcup_{i \in [n]} L(y_i)$.

Conversely, suppose $|L(y_i)|=1$ for all $i \in [n]$, $L(y_1),\ldots,L(y_n)$ are pairwise disjoint, and $L(x)=\bigcup_{i \in [n]} L(y_i)$. Suppose $L(y_i)=\{c_i\}$ as before. Assume for the sake of contradiction that $f$ is a proper $L$-coloring of $G$. Then $f(y_i)=c_i$ for all $i \in [n]$. Since $L(x)=\{c_1,\ldots,c_n\}$, $f(x) \in \{c_1,\ldots,c_n\}$ which contradicts the fact that $f$ is proper.
\end{proof}

We are now ready to prove Proposition~\ref{thm: KNN}.

\begin{proof}
First, note that Statement~(ii) of Theorem~\ref{thm: K3n} implies the desired result when $n=3$.  So, we assume that $n \geq 4$.  Suppose the partite sets of $G$ are $X=\{x_1,\ldots,x_n\}$ and $Y=\{y_1,\ldots,y_n\}$. Suppose $L$ is an arbitrary $n$-assignment for $G$, and $r:D \rightarrow \bigcup_{v \in V(G)}L(v)$ is a request of $L$. Since $\rho(G)=2$, if we can show there is a proper $L$-coloring of $G$ that satisfies at least $|D|/2$ vertex requests, we will have $\chi_{\ell flex}(G) \leq n$.  We will show such a coloring exists in each of the following cases: (1) $|D| \in \{2n-1,2n\}$, (2) $|D| \in \{2n-3,2n-2\}$, and (3) $|D| \leq 2n-4$.

First, suppose that $|D| \in \{2n-1,2n\}$. Without loss of generality, assume $X \subseteq D$ and $\{y_1,\ldots,y_{n-1}\} \subseteq D$. Suppose $r(x_i) = c_i$ for each $i \in [n]$. We first show that we may assume $c_1,\ldots,c_n$ are pairwise distinct. Suppose there are $i,j \in [n]$ with $i \neq j$ such that $c_i=c_j$.  We can construct a proper $L$-coloring $f$ of $G$ that satisfies $n \geq |D|/2$ vertex requests as follows.  Let $f(x_i)=c_i$ for all $i \in [n]$ and color each $y \in Y$ with an element in  $L(y)-\{c_1,\ldots,c_n\}$.  So, we may assume that $c_1,\ldots,c_n$ are pairwise distinct. 

Now, we construct a proper $L$-coloring $f$ of $G$ as follows.  Let $f(y_1)=r(y_1)$. Without loss of generality, assume that if $r(y_1) \in \{c_1,\ldots,c_n\}$, then $r(y_1)=c_n$.  Now, let $f(x_i)=c_i$ for all $i \in [n-1]$.  By Lemma~\ref{lem: P3}, we can complete a proper $L$-coloring of $G$ that satisfies $n$ vertex requests unless there are pairwise distinct colors $d_2, \ldots, d_n$ such that $L(y_i)=\{c_1,\ldots,c_{n-1},d_i\}$ for each $i \in \{2, \ldots, n\}$ and $L(x_n)=\{r(y_1),d_2,\ldots,d_n\}$ (In the case $r(y_1) \neq c_n$, $c_n \in \{d_2, \ldots, d_n\}$ and $r(y_1) \notin \{d_2, \ldots, d_n\}$, and in the case $r(y_1) = c_n$, $c_n \notin \{d_2, \ldots, d_n\}$).  So, assume these properties hold, and we will reconstruct the coloring $f$. 

If there exists a $k \in \{2, \ldots, n\}$ such that $r(y_k) = c_q$ for some $q \in [n-1]$, then let $f(y_i)=c_q$ for each $i \in \{2, \ldots, n\}$. Let $f(x_i)=c_i$ for each $i \in [n] - \{q\}$ and color $y_1$ so that $f(y_1) \in L(y_1)-\{c_i : i \in [n] - \{q\} \}$. Finally, color $x_1$ so that $f(x_1) \in L(x_1)-\{c_q,f(y_1)\}$.  It is easy to see that $f$ is a proper $L$-coloring that satisfies $n$ vertex requests.  If $r(y_i) \notin \{c_1,\ldots,c_{n-1}\}$ for each $i \in \{2, \ldots, n\}$, then $r(y_i)=d_i$ for each $i \in \{2, \ldots, n-1\}$ . Let $f(y_2)=d_2$, $f(x_i)=c_i$ for each $i \in \{2, \ldots, n\}$, and $f(y_i)=c_1$ for each $i \in \{3, \ldots, n\}$.  Then, let $f(y_1)$ be an element in $L(y_1)-\{c_2,\ldots,c_n\}$. Finally, we can complete a proper $L$-coloring that satisfies at least $n$ vertex requests by letting $f(x_1)$ be an element in $L(x_1)-\{f(y_1),d_2,c_1\}$ (which is possible since $n \geq 4$).

Second, suppose $|D| \in \{2n-3,2n-2\}$.  Since $n \geq 4$, we may assume without loss of generality that $\{x_1,\ldots,x_{n-1}\} \subseteq D$ and $y_1 \in D$.  Suppose $r(x_i) = c_i$ for each $i \in [n]$.  We will now construct a proper $L$-coloring $f$ of $G$ that satisfies at least $n-1 \geq |D|/2$ vertex requests.  Let $f(x_i)=c_i$ for $1 \leq i \leq n-1$.  By Lemma~\ref{lem: P3}, we can complete a proper $L$-coloring of $G$ that satisfies $n-1$ vertex requests unless $c_1, \ldots, c_{n-1}$ are pairwise distinct, there are pairwsie distinct colors $d_1, \ldots, d_n$ such that $L(y_i)=\{c_1,\ldots,c_{n-1},d_i\}$ for each $i \in [n]$, and $L(x_n)=\{d_1,d_2,\ldots,d_n\}$.  So, assume these properties hold, and we will reconstruct the coloring $f$.

Begin by letting $f(y_1) = r(y_1)$.  Then, define $f$ so that $n-2$ of the vertices in $\{x_i \in \{x_1,\ldots,x_{n-1}\} : r(x_i) \neq r(y_1) \}$ are colored with their requested color (this is possible since at most one vertex in $\{x_1,\ldots,x_{n-1}\}$ requests $r(y_1)$).  Call the set of vertices that have been colored at this stage $A$, and let $B = A - \{y_1\}$.  Then, let $G' = G - A$, and notice that $G'= K_{2,n-1}$.  Let $L'$ be the list assignment for $G'$ given by $L'(x) = L(x) - \{r(y_1)\}$ for each $x \in (X - B)$ and $L'(y) = L(y) - \{f(x) : x \in B\}$ for each $y \in Y - \{y_1\}$.  Clearly, $|L'(x)| \geq n-1 = d_{G'}(x)$ for each $x \in (X - B)$ and $|L'(y)| \geq 2 = d_{G'}(y)$ for each $y \in Y - \{y_1\}$.  So, Theorem~\ref{thm: degree} implies there is a proper $L'$-coloring of $G'$ which means that we can extend $f$ to a proper $L$-coloring of $G$ that satisfies at least $n-1$ vertex requests.

Finally, suppose that $|D| \leq 2n-4$.  Assume without loss of generality that $|D \cap X| \geq |D|/2$, and let $A$ be a set consisting of $\lceil |D|/2 \rceil$ arbitrarily chosen vertices of  $D \cap X$.  Now, color each element of $A$ with its requested color.  Then, let $G' = G - A$, and notice that $G'= K_{n-|A|,n}$ and $n-|A| \geq n - (2n-4)/2 = 2$.  Let $L'$ be the list assignment for $G'$ given by $L'(x) = L(x)$ for each $x \in (X - A)$ and $L'(y) = L(y) - \{r(x) : x \in A\}$ for each $y \in Y$.  Clearly, $|L'(x)| = n = d_{G'}(x)$ for each $x \in X$ and $|L'(y)| \geq n-|A| = d_{G'}(y)$ for each $y \in Y$.  So, Theorem~\ref{thm: degree} implies there is a proper $L'$-coloring of $G'$ which means that we can extend our coloring to a proper $L$-coloring of $G$ that satisfies at least $|D|/2$ vertex requests.      
\end{proof}

Recall that $\chi_{\ell}(K_{2,n})=2$ when $n \in [3]$, and $\chi_{\ell}(K_{2,n})=3$ otherwise.  Having proved Theorem~\ref{thm: awesome_sauce}, we are ready to prove Theorem~\ref{cor: K2n} which we restate.

\begin{customthm} {\bf \ref{cor: K2n}}
Suppose $G = K_{2,n}$
\[\epsilon_{\ell}(G,t)=
\begin{cases}
1/2 \text{ if } n=1 \text{ and } t\geq2
\\0 \text{ if } n\in \{2,3\} \text{ and } t=2 
\\1/2 \text{ otherwise}
\end{cases}
.\]
\end{customthm}

\begin{proof}
If $n=1$, we know that $\chi_{\ell flex}(G) = 2$. Thus when $n = 1$, $\epsilon_{\ell}(G,t) = 1/2$ for $t \geq 2$. Also if $n \geq 2$ and $t \geq 3$, $\epsilon_{\ell}(G,t) = 1/2$ by Theorem~\ref{thm: awesome_sauce}.

Suppose $n\in \{2,3\}$. Suppose the partite sets of $G$ are $X =\{x_1, x_2\}$ and $Y =\{y_1, \ldots, y_n\}$.  Let $L$ be the 2-assignment for $G$ given as follows.  Let $L(x_i) = \{2i-1, 2i\}$ for each $i \in [2]$. Let $L(y_1) = \{1,3\}$, $L(y_2) = \{1,4\}$, and if $n = 3$ let $L(y_3) = [2]$. Suppose $r$ is the request of $L$ with domain $\{x_1\}$ given by $r(x_1)=1$.  Suppose for the sake of contradiction there is a proper $L$-coloring of $G$, $f$, such that $f(x_1)=1$.  Notice $f(y_1)=3$ and $f(y_2)=4$. Consequently, $f(x_2)=f(y_1)$ or $f(x_2)=f(y_2)$. This means $f$ is not proper which is a contradiction. Thus  $\epsilon_{\ell}(G,2) = 0$.
\end{proof}

With Theorem~\ref{cor: K2n} in mind, we now turn our attention to the list flexibility function of $K_{3,n}$ for $n \geq 3$ and determining the value of $\epsilon_{\ell}(G,3)$ when $G$ is a $3$-choosable complete bipartite graph.  As was mentioned in Section~\ref{intro}, it is known that for $3 \leq m \leq n$, $K_{m,n}$ is 3-choosable if and only if $m=3$ and $n \leq 26$, $m=4$ and $n \leq 20$, $m=5$ and $n \leq 12$, or $m=6$ and $n \leq 10$.  It is also easy to show that $\chi_{\ell}(K_{3,n}) \geq 3$ whenever $n \geq 3$. 

We begin by proving results on the $(a,b)$-choosability of complete bipartite graphs that will be used later.  Our first such result is a generalization of Proposition 5 in~\cite{AC21}.

\begin{lem} \label{lem: precolor}
Suppose $G = K_{n, b}$.  Then, $G$ is $(t,n)$-choosable if and only if $b < t^n$.
\end{lem}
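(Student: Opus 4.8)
The plan is to prove both directions directly, with the key structural observation that a choice of one color per vertex of the size-$n$ partite side of $K_{n,b}$ fails to extend to a proper coloring of the size-$b$ side if and only if that choice is ``rainbow'' (all colors distinct) and its color set coincides exactly with one of the $n$-element lists on the size-$b$ side.

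For the forward direction, I would show the contrapositive: if $b \geq t^n$ then there is a $(t,n)$-assignment $L$ with no proper $L$-coloring. Let $a_1,\dots,a_n$ be the partite set of size $n$, and assign pairwise disjoint lists $L(a_1),\dots,L(a_n)$ each of size $t$. There are exactly $t^n$ systems of representatives $(c_1,\dots,c_n)\in L(a_1)\times\cdots\times L(a_n)$, and each is a set of $n$ distinct colors. Since $b \geq t^n$, assign these $t^n$ sets as the lists of $t^n$ of the vertices on the size-$b$ side (and give any leftover vertices any one of these sets). Any proper $L$-coloring colors $a_1,\dots,a_n$ with some system of representatives; the vertex whose list equals that system then has no available color, a contradiction. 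Hence $K_{n,b}$ is not $(t,n)$-choosable.

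For the converse, suppose $b < t^n$, let $L$ be an arbitrary $(t,n)$-assignment, and let $A=\{a_1,\dots,a_n\}$ be the side with lists of size $t$ and $B$ the side of size $b$ with lists of size $n$. Split into two cases. If some $L(a_i)$ and $L(a_j)$ ($i\neq j$) share a color $c$, color $a_i$ and $a_j$ both with $c$ and color the remaining vertices of $A$ arbitrarily from their lists; then at most $n-1$ distinct colors appear on $A$, so each $y\in B$ (which has $n$ available colors and at most $n-1$ forbidden) can be colored greedily. If $L(a_1),\dots,L(a_n)$ are pairwise disjoint, then for each system of representatives the color set has exactly one element in each $L(a_i)$, so the map sending a system to its color set is injective; thus there are $t^n$ distinct color sets arising this way. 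Each $y\in B$ rules out at most one such system, namely the unique one (if any) whose color set equals $L(y)$, so at most $b<t^n$ systems are ruled out. A surviving system gives a proper coloring of $A$, and since its (rainbow) color set differs from every $L(y)$, each $y\in B$ can then be colored from $L(y)$ minus the colors on $A$.

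I expect no serious obstacle. The only point requiring care is the bookkeeping in the disjoint case: one must note that pairwise disjointness of the $L(a_i)$ forces the ``system of representatives $\mapsto$ color set'' map to be injective and forces a vertex $y$ with $|L(y)|=n$ to block at most the single system whose color set is $L(y)$. The real content is recognizing that ``rainbow and equal to some $L(y)$'' is the only obstruction, and that disjoint lists on the size-$n$ side are precisely the extremal configuration realizing the bound $t^n$.
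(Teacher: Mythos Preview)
Your proof is correct and follows essentially the same approach as the paper: the converse is handled by the identical two-case split (overlapping lists on the size-$n$ side versus pairwise disjoint lists, with the same counting argument in the disjoint case). The only difference is that for the forward direction the paper simply cites Proposition~5 of~\cite{AC21}, whereas you supply the explicit disjoint-list construction yourself; that construction is the standard one and is exactly what the cited result uses.
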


\begin{proof}

When $b \geq t^n$, the fact that $G$ is not $(t,n)$-choosable follows from Proposition~5 in~\cite{AC21}.  Conversely suppose $b < t^n$, and $L$ is a $(t,n)$-assignment for $G$.  Suppose the partite sets of $G$ are $X= \{x_1, \ldots, x_n\}$, $Y= \{y_1, \ldots, y_b \}$.  We will construct a proper $L$-coloring, $f$, of $G$ in each of two cases.

Suppose there are $i,j \in [n]$ with $i \neq j$ such that there is a $c \in L(x_i) \cap L(x_j)$.  Let $f(x_i) = f(x_j) = c$.  Then, for each $v \in X -\{x_i,x_j\}$ let $f(v)$ be some element in $L(v)$.  Notice $|f(X)| \leq n-1$.  So, we can complete a proper $L$-coloring of $G$ by letting $f(v)$ be some element in $L(v) - f(X)$ for each $v \in Y$.

Now, suppose $L(x_1), \ldots, L(x_n)$ are pairwise disjoint. Let
$$A= \{\{a_1, \ldots, a_n\} : a_i \in L(x_i) \text{ for each } i \in [n]\},$$
and notice $|A| = t^n$.  So, we may suppose there is a $\{c_1, \ldots, c_n \} \in A - \{L(y_i): i \in [b]\}$ where $c_i \in L(x_i)$ for each $i \in [n]$.   Now, let $f(x_i) = c_i$ for each $i \in [n]$.    Notice $f(X) \neq L(y_j)$ for each $j \in [b]$.  So, we can complete a proper $L$-coloring of $G$ by letting $f(v)$ be some element in $L(v) - f(X)$ for each $v \in Y$.
\end{proof}

We now give a complete characterization of $(3,2)$-choosable complete bipartite graphs to help with the proof of Theorem~\ref{thm: K3n}.  Before beginning the proof, we mention an observation and two lemmas that we will need.

\begin{obs} \label{obs: P4}
Suppose $H$ is a subgraph of $G$, and $g:V(G) \rightarrow \mathbb{N}$ is a function. Suppose $h$ is the restriction of $g$ to $V(H)$. If $G$ is $g$-choosable, then $H$ is $h$-choosable.
\end{obs}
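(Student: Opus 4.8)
This is a routine monotonicity-type fact, and the natural approach is to lift an arbitrary list assignment on $H$ to one on $G$, apply $g$-choosability of $G$, and restrict the resulting coloring back to $H$. Concretely, I would start with an arbitrary list assignment $L_H$ for $H$ satisfying $|L_H(v)| = h(v) = g(v)$ for each $v \in V(H)$, and the goal is to produce a proper $L_H$-coloring of $H$.

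The first step is to build a list assignment $L_G$ for $G$ by setting $L_G(v) = L_H(v)$ for each $v \in V(H)$ and, for each $v \in V(G) \setminus V(H)$, letting $L_G(v)$ be an arbitrary set of $g(v)$ colors (such a set exists since $g(v) \in \N$, so $g(v) \geq 1$, and there are infinitely many colors available; it is irrelevant whether these lists overlap with the lists on $V(H)$). Then $|L_G(v)| = g(v)$ for every $v \in V(G)$, so $L_G$ witnesses that $L_G$ is a list assignment of the type required in the definition of $g$-choosability.

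The second step is to apply the hypothesis that $G$ is $g$-choosable to obtain a proper $L_G$-coloring $f$ of $G$, and then restrict $f$ to $V(H)$. Since $H$ is a subgraph of $G$ we have $E(H) \subseteq E(G)$, so the restriction $f|_{V(H)}$ is still proper on $H$; and for each $v \in V(H)$ we have $f(v) \in L_G(v) = L_H(v)$, so $f|_{V(H)}$ is a proper $L_H$-coloring of $H$. As $L_H$ was an arbitrary assignment with list sizes prescribed by $h$, this shows $H$ is $h$-choosable.

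There is no real obstacle here; the only point requiring a word of care is the observation that list sizes on the vertices of $G$ outside $H$ can be chosen freely (using $g(v) \geq 1$), and that properness on $H$ follows immediately from $E(H) \subseteq E(G)$. Accordingly I would keep the write-up to a short paragraph.
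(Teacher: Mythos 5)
Your proof is correct and is exactly the standard argument the paper has in mind; the paper states this as an Observation without proof, treating it as immediate. Extending the list assignment arbitrarily to $V(G)\setminus V(H)$, invoking $g$-choosability of $G$, and restricting the resulting proper coloring to $H$ (using $E(H)\subseteq E(G)$) is precisely the intended reasoning, and your write-up has no gaps.
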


\begin{lem} [\cite{CR12}]\label{lem: smallpot}
Suppose that $G$ is a graph and $f: V(G) \rightarrow \mathbb{N}$ is a function satisfying $f(v) < |V(G)|$ for all $v \in V(G)$.  If $G$ is not $f$-choosable, then there is an $f$-assignment $L$ for $G$ satisfying $|\bigcup_{v \in V(G)} L(v)| < |V(G)|$ such that there is no proper $L$-coloring of $G$.
\end{lem}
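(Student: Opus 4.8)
The plan is to prove Lemma~\ref{lem: smallpot} by contradiction, working with a bad $f$-assignment whose ``pot'' $\bigcup_{v}L(v)$ is as small as possible. Write $n=|V(G)|$ and assume $G$ is not $f$-choosable. Among all $f$-assignments $L$ for which $G$ is not $L$-colorable (this set is nonempty), choose one minimizing $|\bigcup_{v\in V(G)}L(v)|$; call this set of colors $P$. If $|P|<n$ we are done, so suppose for contradiction that $|P|\ge n$. The two ingredients I would combine are: (i) a system of distinct representatives (SDR) for the lists would be a rainbow, hence proper, $L$-coloring, so Hall's theorem forces a nonempty $S\subseteq V(G)$ with $|\bigcup_{v\in S}L(v)|<|S|$ (call such an $S$ a \emph{Hall-violator}); and (ii) minimality of $|P|$, which should let me ``pad'' small-pot bad sub-instances up to bad $f$-assignments of all of $G$, deriving a contradiction.

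Concretely, I would fix $S$ to be a Hall-violator with $|S|$ as large as possible; note $S\ne\varnothing$, and $S\ne V(G)$ since $|\bigcup_{v}L(v)|=|P|\ge n$. Then I would split into two cases. If $G[S]$ is not $(L|_S)$-colorable, pick a color set $Q\supseteq\bigcup_{v\in S}L(v)$ with $|Q|=n-1$ (possible because $|\bigcup_{v\in S}L(v)|\le|S|-1\le n-1$), and define $L^{+}$ by $L^{+}(v)=L(v)$ for $v\in S$ and $L^{+}(v)\subseteq Q$ with $|L^{+}(v)|=f(v)$ for $v\notin S$ (possible since $f(v)\le n-1=|Q|$). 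Any proper $L^{+}$-coloring of $G$ restricts to a proper $(L|_S)$-coloring of $G[S]$, so $L^{+}$ is bad; but its pot lies in $Q$, hence has size at most $n-1<|P|$, contradicting minimality.

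In the remaining case $G[S]$ has a proper $(L|_S)$-coloring $\sigma$. By maximality of $S$, for every $v\notin S$ the set $S\cup\{v\}$ is not a Hall-violator, which yields $|L(v)\setminus\bigcup_{u\in S}L(u)|\ge 2$. Put $M(v)=L(v)\setminus\bigcup_{u\in S}L(u)$ for $v\notin S$, so $|M(v)|\ge 2$, and consider $H=G-S$ with list assignment $M$. If $H$ were $M$-colorable by some $\tau$, then $\sigma\cup\tau$ would be a proper $L$-coloring of $G$: it is proper within $S$ and within $V(G)\setminus S$, and proper across the cut because $\sigma$ uses only colors of $\bigcup_{u\in S}L(u)$ while $\tau$ uses only colors outside that set. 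Hence $H$ is not $M$-colorable, so (again, an SDR would be a proper coloring) there is a nonempty $U\subseteq V(H)$ with $|\bigcup_{v\in U}M(v)|<|U|$. Since $\bigcup_{v\in S\cup U}L(v)\subseteq\bigl(\bigcup_{u\in S}L(u)\bigr)\cup\bigl(\bigcup_{v\in U}M(v)\bigr)$, we get $|\bigcup_{v\in S\cup U}L(v)|\le(|S|-1)+(|U|-1)<|S\cup U|$, so $S\cup U$ is a Hall-violator strictly larger than $S$, contradicting maximality. Both cases are impossible, so $|P|<n$.

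The step I expect to be the crux is making the two ``not colorable $\Rightarrow$ Hall fails inside it'' applications fit together: the bookkeeping must be arranged so that the violator found inside $H$ merges with $S$ into a strictly larger violator of $G$, which is what forces the contradiction in Case~2. This is exactly where the hypothesis $f(v)<n$ is used: it guarantees both that the padded lists $L^{+}(v)$ fit inside an $(n-1)$-color pot in Case~1 and, via $|\bigcup_{v\in S}L(v)|\le|S|-1$, that $V(G)$ itself is never a Hall-violator, so $S$ is a proper subset and $H$ is nonempty.
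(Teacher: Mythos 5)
The paper does not prove this lemma; it is imported from \cite{CR12} (the ``Small Pot Lemma''), so there is no in-paper argument to compare against. Your proof is correct and self-contained. Both Hall's-theorem applications are sound (a system of distinct representatives is a rainbow, hence proper, list coloring, so non-colorability kills the SDR), the inequality $|\bigcup_{v\in S}L(v)|\le |S|-1\le n-1$ correctly feeds both the padding in Case~1 and the bound $|M(v)|\ge 2$, and the merge $|\bigcup_{v\in S\cup U}L(v)|\le(|S|-1)+(|U|-1)<|S\cup U|$ does produce a strictly larger violator, contradicting maximality of $S$. Two cosmetic remarks: the observation $|M(v)|\ge 2$ is never actually used (the Case~2 argument only needs that $H$ is not $M$-colorable, which you establish directly via $\sigma\cup\tau$), and Case~1 does not need the minimality of $|P|$ at all --- the assignment $L^{+}$ you build there already has pot size at most $n-1$ and is bad, so it witnesses the conclusion of the lemma outright. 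Consequently the whole proof can be run without the extremal choice of $L$: take any bad $f$-assignment, take a maximum Hall-violator $S$ (if none exists, an SDR gives a coloring), show via the Case~2 argument that $G[S]$ cannot be $(L|_S)$-colorable, and then pad as in Case~1. This is essentially how the argument is organized in \cite{CR12}, so your route is a mild reshuffling of the standard one rather than a genuinely different proof.
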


\begin{lem}\label{lem: k_33}
  Suppose $G=K_{3,3}$, and $f: V(G) \rightarrow \{2,3\}$ is such that there is a unique $x \in V(G)$ with $f(x)=3$.  Then $G$ is $f$-choosable.  
\end{lem}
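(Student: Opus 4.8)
The plan is to proceed by contradiction via a direct combinatorial analysis. Write the parts of $G$ as $X=\{x_1,x_2,x_3\}$ and $Y=\{y_1,y_2,y_3\}$, and after relabeling assume $f(x_1)=3$ while $f(v)=2$ for every other vertex $v$. Suppose $L$ is an $f$-assignment for which no proper $L$-coloring exists. The first step is to restate this assumption: a choice of colors $a_i\in L(x_i)$ for $i\in[3]$ extends to a proper $L$-coloring of $G$ exactly when $L(y_j)\not\subseteq\{a_1,a_2,a_3\}$ for every $j\in[3]$ (color each $y_j$ from $L(y_j)\setminus\{a_1,a_2,a_3\}$, which is nonempty under that hypothesis). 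So our assumption says that for every $(a_1,a_2,a_3)\in L(x_1)\times L(x_2)\times L(x_3)$ there is some $j$ with $L(y_j)\subseteq\{a_1,a_2,a_3\}$.

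Next I would fix $a_2\in L(x_2)$ and $a_3\in L(x_3)$, set $S=\{a_2,a_3\}$, and record which $a_1\in L(x_1)$ are \emph{blocked}, meaning $L(y_j)\subseteq\{a_1\}\cup S$ for some $j$. Since each $|L(y_j)|=2$, exactly one of three things happens for each $j$: $L(y_j)\subseteq S$ (then $y_j$ blocks every $a_1$); $|L(y_j)\cap S|=1$ (then $y_j$ blocks precisely the single color $L(y_j)\setminus S$); or $|L(y_j)\cap S|=0$ (then $y_j$ blocks nothing). Hence, provided no $L(y_j)$ lies inside $S$, the set of blocked colors is $B_S:=\{\,L(y_j)\setminus S:\ |L(y_j)\cap S|=1\,\}$, which has at most $3$ elements; and for $S$ to block all of $L(x_1)$ we would need $B_S=L(x_1)$, which forces all three $L(y_j)$ to meet $S$ in exactly one point, with three distinct ``other'' elements exhausting $L(x_1)$.

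Now I split on whether $L(x_2)\cap L(x_3)=\emptyset$. If some $\gamma\in L(x_2)\cap L(x_3)$, take $a_2=a_3=\gamma$, so $S=\{\gamma\}$; no $L(y_j)$ can sit inside a singleton, so $S$ must block all of $L(x_1)$, which (by the previous paragraph) forces $\gamma\in L(y_j)$ for every $j$. But then coloring every $y_j$ with $\gamma$ and each $x_i$ with any element of $L(x_i)\setminus\{\gamma\}$ (nonempty since $|L(x_i)|\ge 2$) is a proper $L$-coloring, a contradiction. If instead $L(x_2)\cap L(x_3)=\emptyset$, write $L(x_2)=\{p,p'\}$ and $L(x_3)=\{q,q'\}$; then the four ``corners'' $\{p,q\},\{p,q'\},\{p',q\},\{p',q'\}$ are pairwise distinct $2$-element sets, and each has a unique disjoint partner among the four. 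A corner $S$ contains some $L(y_j)$ iff $L(y_j)=S$, and distinct corners require distinct lists, so at most three corners are of this form; pick a corner $S_0$ that is not, so $S_0$ blocks all of $L(x_1)$ and hence every $L(y_j)$ meets $S_0$ in exactly one point. Let $S_1$ be the corner disjoint from $S_0$. Every $L(y_j)$ then contains an element of $S_0$, which is not in $S_1$, so $L(y_j)\ne S_1$; moreover the element of such an $L(y_j)$ lying outside $S_1$ is its $S_0$-element, so $B_{S_1}\subseteq S_0$ and $|B_{S_1}|\le 2<3=|L(x_1)|$. Thus some $a_1\in L(x_1)\setminus B_{S_1}$ yields a triple $\{a_1\}\cup S_1$ containing no $L(y_j)$, contradicting our assumption.

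I expect the only delicate part to be the bookkeeping in the disjoint case: verifying that the four corners are genuinely distinct $2$-sets, that the three $y$-lists cannot obstruct both a corner and its disjoint partner, and that the ``blocked'' formalism is applied with the side of $G$ carrying the unique list of size $3$ in the role of $X$. Note that this argument is self-contained and uses neither Lemma~\ref{lem: P3} nor Lemma~\ref{lem: smallpot}; if preferred, one can peel off ``if $\bigcap_j L(y_j)\ne\emptyset$, color $Y$ monochromatically and finish'' as a preliminary reduction, after which only the all-blocking configurations above remain to be excluded.
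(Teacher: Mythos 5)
Your proof is correct. The reduction at the start (a choice $(a_1,a_2,a_3)\in L(x_1)\times L(x_2)\times L(x_3)$ extends to a proper $L$-coloring iff no $L(y_j)$ is contained in $\{a_1,a_2,a_3\}$, using that $X$ is independent) is sound, the ``blocking'' bookkeeping is right (a $2$-list $L(y_j)$ blocks everything, exactly one color, or nothing according as $|L(y_j)\cap S|$ is $2$, $1$, or $0$), and both cases close: in the intersecting case the forced conclusion $\gamma\in\bigcap_j L(y_j)$ yields a monochromatic coloring of $Y$, and in the disjoint case the pigeonhole on the four corners versus three $Y$-lists, followed by passing to the disjoint partner corner $S_1$ with $B_{S_1}\subseteq S_0$ so $|B_{S_1}|\le 2<3=|L(x_1)|$, produces an unblocked triple. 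Your argument agrees with the paper's up to the case split on whether the two size-$2$ lists in $X$ intersect (the paper also peels off the $\bigcap_j L(y_j)\neq\emptyset$ case first and reduces the intersecting case to it), but diverges in the disjoint case: the paper deletes the size-$3$ vertex, invokes the $2$-choosability of $K_{2,3}$ to get a coloring $g$, assumes $L$ of the size-$3$ vertex equals $\{g(y_1),g(y_2),g(y_3)\}$, and then shows the ``complementary'' coloring (each vertex recolored with the other element of its list) is proper and leaves room on the size-$3$ list. Your corner-counting argument is more self-contained (no appeal to $2$-choosability of $K_{2,3}$) and makes the extremal structure of a hypothetical bad assignment explicit, at the cost of somewhat heavier notation; the paper's recoloring trick is shorter but leans on an external choosability fact.
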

\begin{proof}  Suppose the bipartition of $G$ is $X = \{x_1, x_2, x_3 \}$, $Y = \{y_1, y_2, y_3 \}$ throughout the proof. Suppose without loss of generality that $f(x_3)=3$.  Suppose $L$ is an arbitrary list assignment for $G$ such that $|L(v)|=f(v)$ for each $v \in V(G)$. To prove the desired result, we will construct a proper $L$-coloring of $G$.  Notice that in the case $\bigcap_{i=1}^3 L(y_i) \neq \emptyset$, we can greedily construct a proper $L$-coloring by coloring the vertices in $Y$ with an element from $\bigcap_{i=1}^3 L(y_i)$, and then greedily coloring the vertices in $X$.  

Now, we show a proper $L$-coloring of $G$ exists when there is a $c \in L(x_1) \cap L(x_2)$.  In this case we may assume $c \notin L(x_3)$, and $\{L(y_i):i \in [3]\} = \{\{c,z\} : z \in L(x_3) \}$ since otherwise we could construct a proper $L$-coloring of $G$ where $x_1$ and $x_2$ are colored with $c$.  This however implies that $c \in \bigcap_{i=1}^3 L(y_i)$ which implies a proper $L$-coloring of $G$ must exist. 

Finally, suppose $L(x_1) \cap L(x_2) = \emptyset$. 
 Let $L'$ be the 2-assignment for $G'=G - x_3$ obtained by restricting the domain of $L$ to $V(G')$.  Since $G'$ is 2-choosable there is a proper $L'$-coloring of $G'$ which we will call $g$.  We may suppose $L(x_3) = \{g(y_i) : i \in [3]\}$ since otherwise we could extend $g$ to a proper $L$-coloring of $G$.  For each $i \in [2]$ and $j \in [3]$, let $a_i$ be the element in the set $L(x_i) - \{g(x_i)\}$ and let $b_j$ be the element in the set $L(y_j) - \{g(y_j)\}$.  For each $j \in [3]$, we may assume $b_j \in \{g(x_1),g(x_2)\}$ since otherwise we could obtain a proper $L$-coloring of $G$ by recoloring $y_j$ with $b_j$ and then coloring $x_3$ with $g(y_j)$. Thus, $\{b_1, b_2, b_3\}\subseteq \{g(x_1), g(x_2)\}$. Since $L(x_1) \cap L(x_2) = \emptyset$, $\{a_1, a_2\} \cap \{g(x_1), g(x_2)\}=\emptyset$.  Consider the $L'$-coloring of $G'$ given by $h(x_i) = a_i$ for each $i \in [2]$ and $h(y_j)=b_j$ for each $j \in [3]$.  It is easy to see that $h$ is proper.  Since $|\{b_1,b_2,b_3\}| \leq 2$, we can extend $h$ to a proper $L$-coloring of $G$ by coloring $x_3$ with an element of $L(x_3) - \{b_1,b_2,b_3\}$. 
\end{proof}

\begin{customthm} {\bf \ref{thm: 3_2}}
Suppose $G=K_{m,n}$ where $m,n \in \mathbb{N}$ satisfy $m \leq n$.
\begin{enumerate}[label=(\roman*)]
    \item We have $G$ is $(3,2)$-choosable if and only if $m=1$ and $n \in \mathbb{N}$, $m=2$ and $n \leq 8$, $m=3$ and $n \leq 6$, or $m=4$ and $n=4$. 
    \item We have $G$ is $(2,3)$-choosable if and only if $m \in [2]$ and $n \in \mathbb{N}$, $m=3$ and $n \leq 7$, or $m=4$ and $n \leq 5$.
\end{enumerate}
\end{customthm}

\begin{proof} Suppose the bipartition of $G$ is $X = \{x_1, \ldots, x_m \}$, $Y = \{y_1, \ldots, y_n \}$ throughout the proof. Proving Statements~(i) and~(ii) require us to prove four implications which we will write as four claims.  We begin by proving two claims that imply Statement~(i) 
\\
\\
\noindent \textbf{Claim A.} $G$ is not $(3,2)$-choosable when $m=2$ and $n \geq 9$, $m=3$ and $n \geq 7$, $m=4$ and $n \geq 5$, or $m \geq 5$. 
\\
\\
If $m=2$ and $n \geq 9$, then $G$ is not $(3,2)$-choosable by Lemma~\ref{lem: precolor}. If $m=3$ and $n \geq 7$, it is sufficient to show that $G$ is not $(3,2)$-choosable when $n=7$ by Observation~\ref{obs: P4}. For $G=K_{3,7}$, let $L$ be the $(3,2)$-assignment for $G$ defined by $L(x_1)=\{1,2,3\}$, $L(x_2)=\{1,3,4\}$, $L(x_3)=\{2,4,5\}$, $L(y_1)=\{1,2\}$, $L(y_2)=\{1,4\}$, $L(y_3)=\{1,5\}$, $L(y_4)=\{2,3\}$, $L(y_5)=\{2,4\}$, $L(y_6)=\{3,4\}$, and $L(y_7)=\{3,5\}$. Assume for the sake of contradiction that $f$ is a proper $L$-coloring of $G$. If $f(x_1)=1$, then $f(y_1)=2$, $f(y_2)=4$, and $f(y_3)=5$. Then $f(x_3) \in L(x_3)-\{2,4,5\}$ which is a contradiction. If $f(x_1)=2$, then $f(y_1)=1$, $f(y_4)=3$, and $f(y_5)=4$. Then $f(x_2) \in L(x_2)-\{1,3,4\}$ which is a contradiction. Finally, if $f(x_1)=3$, we have $f(y_4)=2$, $f(y_6)=4$, and $f(y_7)=5$. Then $f(x_3) \in L(x_3)-\{2,4,5\}$ which is a contradiction. So, $f(x_1) \notin L(x_1)$ which is a contradiction. Therefore, $G$ is not $L$-colorable. 

If $m=4$ and $n \geq 5$, it is sufficient to show that $G$ is not $(3,2)$-choosable when $n=5$ by Observation~\ref{obs: P4}. For $G=K_{4,5}$, let $L$ be the $(3,2)$-assignment for $G$ defined by  $L(x_1)=\{1,2,5\}$, $L(x_2)=\{1,2,6\}$, $L(x_3)=\{3,4,5\}$, $L(x_4)=\{3,4,6\}$, $L(y_1)=\{1,3\}$, $L(y_2)=\{1,4\}$, $L(y_3)=\{2,3\}$, $L(y_4)=\{2,4\}$, $L(y_5)=\{5,6\}$. Assume for the sake of contradiction that $f$ is a proper $L$-coloring of $G$. If $f(x_1)=1$, then $f(y_1)=3$ and $f(y_2)=4$. Thus, $f(x_3)=5$ which implies $f(y_5)=6$. So, $f(x_4) \in L(x_4)-\{3,4,6\}$ which is a contradiction. If $f(x_1)=2$, then $f(y_3)=3$ and $f(y_4)=4$. Thus, $f(x_3)=5$ which means $f(y_5)=6$. Therefore, $f(x_4) \in L(x_4)-\{3,4,6\}$ which is a contradiction. Finally, if $f(x_1)=5$, then $f(y_5)=6$. This leaves us with $f(x_4) \in \{3,4\}$. If $f(x_4)=3$, then $f(y_1)=1$ and $f(y_3)=2$. Therefore, $f(x_2) \in L(x_2)-\{1,2,6\}$ which is a contradiction. If $f(x_4)=4$, then $f(y_2)=1$ and $f(y_4)=2$. This means $f(x_2) \in L(x_2)-\{1,2,6\}$, which is another contradiction. Therefore, $G$ is not $L$-colorable. If $m \geq 5$, then $G$ contains a copy of $K_{4,5}$ as a subgraph. So $G$ is not $(3,2)$-choosable by Observation~\ref{obs: P4}.  This completes the proof of Claim A.
\\
\\
\noindent \textbf{Claim B.}  If $m=1$ and $n \in \mathbb{N}$, $m=2$ and $n \leq 8$, $m=3$ and $n \leq 6$, or $m=4$ and $n=4$, then $G$ is $(3,2)$-choosable
\\
\\
If $m=1$ and $n \in \mathbb{N}$, $G$ is a tree. Since $G$ is a tree it is 2-choosable which means it is also $(3,2)$-choosable.  If $m=2$ and $n \leq 8$, then $G$ is $(3,2)$-choosable by Lemma~\ref{lem: precolor}. 

Suppose $m=3$ and $n \leq 6$.  We must show that $G$ is $(3,2)$-choosable.  By Observation~\ref{obs: P4} we may assume that $n=6$.  For the sake of contradiction suppose $G$ is not $(3,2)$-choosable. Then by Lemma~\ref{lem: smallpot}, there is a $(3,2)$-assignment $L$ for $G$ satisfying $|\bigcup_{v \in V(G)} L(v)| < 9$ such that there is no proper $L$-coloring of $G$. Let $\mu : \bigcup_{i = 1}^{6} L(y_i) \rightarrow \mathbb{N}$ be the function given by $\mu(c) = |\{i\in [n]:c\in L(y_i)\}|$.  Let $P=\bigcup_{i=1}^n L(y_i)$ and $M = \max\limits_{c\in P} \mu(c)$. If $M=1$, then $|P|=12$ which implies $|\bigcup_{v \in V(G)} L(v)| \geq 12$ which is a contradiction.  Suppose $M=2$.  We will obtain a contradiction by constructing a proper $L$-coloring of $G$.  Since $|\bigcup_{v \in V(G)} L(v)| < 9$, we have that $L(x_1)$, $L(x_2)$, and $L(x_3)$ are not pairwise disjoint. So, we may assume without loss of generality that $c \in L(x_1) \cap L(x_2)$.  Color $x_1$ and $x_2$ with $c$.  Since $M=2$, we know $c$ is in at most two of the lists $L(y_1), \ldots, L(y_6)$.  For any vertex $y \in Y$ whose list contains $c$, color $y$ with the color in $L(y)- \{c\}$.  Finally we can greedily complete a proper $L$-coloring by coloring $x_3$ followed by any vertices in $Y$ that have not yet been colored. 

Now, suppose that $M=3$ and $\mu(c_1)=3$.  Note that if $c_1 \in \bigcap_{i=1}^3 L(x_i)$, we can construct a proper $L$-coloring by coloring all the vertices in $X$ with $c_1$, and then greedily coloring the vertices in $Y$.  So, we may assume $c_1 \notin \bigcap_{i=1}^3 L(x_i)$ and without loss of generality assume $c_1\notin L(x_3)$.  We will now show that a proper $L$-coloring of $G$ exists.  First color the three vertices in $Y$ whose list contains $c_1$ with $c_1$. Assume without loss of generality these vertices are $y_1,y_2,y_3$.  Let $L'$ be the list assignment for $G - \{y_1,y_2,y_3\}$ defined as follows: $L'(y_j) = L(y_j)$ for each $j \in \{4,5,6\}$, $L'(x_3)=L(x_3)$, then for each $i \in [2]$ obtain $L'(x_i)$ from $L(x_i)$ by deleting $c_1$ from $L(x_i)$ if $c_1 \in L(x_i)$ or arbitrarily deleting a color from $L(x_i)$ in the case $c_1 \notin L(x_i)$.  By Lemma~\ref{lem: k_33} a proper $L'$-coloring of $G - \{y_1,y_2,y_3\}$ exists and such a coloring can be used to complete a proper $L$-coloring of $G$. Finally, suppose that $4 \leq M \leq 6$ and $\mu(c_1) \geq 4$.  Now, color all of the vertices in $Y$ whose list contains $c_1$ with $c_1$. After this let $C$ be the set of vertices in $Y$ that have been colored.  Now let $L'$ be the list assignment for $G-C$ given by $L'(v) = L(v) - \{c_1\}$ for each $v \in V(G-C)$.  Since $|L'(v)| \geq 2$ for each $v \in V(G-C)$ and $G-C$ is a subgraph of a 2-choosable graph (namely, $K_{2,3}$), a proper $L'$-coloring of $G - C$ exists and such a coloring can be used to complete a proper $L$-coloring of $G$. 

Now we turn our attention to the case where $m=n=4$.  We must show that $G$ is $(3,2)$-choosable.  For the sake of contradiction suppose $G$ is not $(3,2)$-choosable.  Then by Lemma~\ref{lem: smallpot}, there is a $(3,2)$-assignment $L$ for $G$ satisfying $|\bigcup_{v \in V(G)} L(v)| < 8$ such that there is no proper $L$-coloring of $G$. Let $\mu : \bigcup_{i = 1}^{4} L(y_i) \rightarrow \mathbb{N}$ be the function given by $\mu(c) = |\{i\in [n]:c\in L(y_i)\}|$.  Let $P=\bigcup_{i=1}^n L(y_i)$ and $M = \max\limits_{c\in P} \mu(c)$. If $M=1$, then $|P|=8$ which implies $|\bigcup_{v \in V(G)} L(v)| \geq 8$ which is a contradiction. Now, suppose that $M=2$ and $\mu(c_1)=2$.  Note that if $c_1 \in \bigcap_{i=1}^4 L(x_i)$, we can construct a proper $L$-coloring by coloring all the vertices in $X$ with $c_1$, and then greedily coloring the vertices in $Y$.  So, we may assume $c_1 \notin \bigcap_{i=1}^4 L(x_i)$. We will now show that a proper $L$-coloring of $G$ exists.  First color the two vertices in $Y$ whose list contains $c_1$ with $c_1$. Assume without loss of generality these vertices are $y_1$ and $y_2$. If there is a $c_2\in L(y_3)\cap L(y_4)$, we complete a proper $L$-coloring by coloring $y_3$ and $y_4$ with $c_2$ and greedily coloring the vertices in $X$.    So, we may assume $L(y_3)\cap L(y_4)=\emptyset$. Now let $L'$ be the list assignment for $G-\{y_1,y_2\}$ given by $L'(v) = L(v) - \{c_1\}$ for each $v \in V(G-\{y_1,y_2\})$.  Notice $L(y_i) = L'(y_i)$ for $i \in \{3,4\}$.  Let $A=\{\{a,b\}: a\in L'(y_3), b\in L'(y_4)\}$ and $B= \{L'(x_i): i\in [4]\}$. Note that $A$ contains four sets each of size two. Also, note each element of $B$ has size at least two, and $B$ has at least one element of size three. Thus, there is a $\{c,d\}\in A-B$, and we assume $c\in L'(y_3)$ and $d\in L'(y_4)$. Notice that $L(x_i)-\{c,d\} \neq \emptyset$ for each $i\in [4]$.  So, we can complete a proper $L$-coloring of $G$ by coloring $y_3$ with $c$, $y_4$ with $d$, and $x_i$ with an element of $L(x_i) - \{c,d\}$ for each $i \in [4]$. 

Finally, suppose that $3 \leq M \leq 4$.  Suppose $\mu(c_1) \geq 3$.  Now, color all of the vertices in $Y$ whose list contains $c_1$ with $c_1$. After this let $C$ be the set of vertices in $Y$ that have been colored.  Let $L'$ be the list assignment for $G-C$ given by $L'(v) = L(v) - \{c_1\}$ for each $v \in V(G-C)$.  Since $|L'(v)| \geq 2$ for each $v \in V(G-C)$ and $G-C$ is a subgraph of a 2-choosable graph (namely, $K_{1,4}$), a proper $L'$-coloring of $G - C$ exists and such a coloring can be used to complete a proper $L$-coloring of $G$.  This completes the proof of Claim B. \\

Next, we prove two implications that imply Statement (ii). 
\\
\\
\noindent \textbf{Claim C.} $G$ is not $(2,3)$-choosable when $m=3$ and $n \geq 8$, $m=4$ and $n \geq 6$, or $m \geq 5$.
\\
\\
If $m=3$ and $n \geq 8$, then $G$ is not $(2,3)$-choosable by Lemma~\ref{lem: precolor}. Also, since $K_{5,5}$ is not (3,2)-choosable, $G$ is not (2,3) choosable whenever $m \geq 5$.

If $m=4$ and $n \geq 6$, it is sufficient to show that $G$ is not $(2,3)$-choosable when $n=6$ by Observation~\ref{obs: P4}. For $G=K_{4,6}$, let $L$ be the $(2,3)$-assignment for $G$ defined by $L(x_1)=\{1,2\}$, $L(x_2)=\{1,3\}$, $L(x_3)=\{4,5\}$, $L(x_4)=\{6,7\}$ $L(y_1)=\{1,4,6\}$, $L(y_2)=\{1,4,7\}$, $L(y_3)=\{1,5,6\}$, $L(y_4)=\{1,5,7\}$, $L(y_5)=\{2,3,4\}$, and $L(y_6)=\{2,3,5\}$. Assume for the sake of contradiction that $f$ is a proper $L$-coloring of $G$.  Note $\{f(x_3), f(x_4)\} \in \{\{4,6\},\{4,7\}, \{5,6\}, \{5,7\} \}$. If $f(x_1)=1$ or $f(x_2)=1$, then there is an $i \in [4]$ such that $L(y_i) \subseteq \{f(x_j): j\in [4]\}$ which contradicts the fact that $f$ is proper. So we may assume $f(x_1)=2$ and $f(x_2)=3$.  This however implies that $L(y_j) = \{f(x_1), f(x_2), f(x_3)\}$ for some $j \in \{5,6\}$ which is a contradiction. Therefore, $G$ is not $L$-colorable. This completes the proof of Claim C.
\\
\\
\noindent \textbf{Claim D.}  If $m \in [2]$ and $n \in \mathbb{N}$, $m=3$ and $n \leq 7$, or $m=4$ and $n \leq 5$, then $G$ is $(2,3)$-choosable.
\\
\\
If $m=1$ and $n \in \mathbb{N}$, $G$ is a tree. Since $G$ is a tree it is 2-choosable which means it is also $(2,3)$-choosable. Also, if $m=3$ and $n\leq 7$, $G$ is (2,3)-choosable by Lemma~\ref{lem: precolor}.  Suppose $m=2$ and $n \in \mathbb{N}$.  We must show that $G$ is $(2,3)$-choosable. Now, suppose $L$ is an arbitrary (2,3)-assignment for $G$. We construct a proper $L$-coloring for $G$ as follows. For each $i\in [2]$, color each $x_i$ with some $c_i \in L(x_i)$. Then greedily color each element in $Y$. Thus, $G$ is $(2,3)$-choosable. 

Finally, suppose $m=4$ and $n \leq 5$. We must show that $G$ is $(2,3)$-choosable. By Observation~\ref{obs: P4} we may assume that $n=5$.  For the sake of contradiction, suppose $G$ is not $(2,3)$-choosable. Then there is a $(2,3)$-assignment $L$ for $G$ such that there is no proper $L$-coloring of $G$. Let $\mu : \bigcup_{i = 1}^{4} L(x_i) \rightarrow \mathbb{N}$ be the function given by $\mu(c) = |\{i\in [4]:c\in L(x_i)\}|$.  Let $P=\bigcup_{i=1}^4 L(x_i)$ and $M = \max\limits_{c\in P} \mu(c)$. If $M=1$, suppose without loss of generality $L(x_i)=\{2i-1,2i\}$ for $i\in [4]$. Let $T=\{L(y_1),L(y_2),L(y_3),L(y_4),L(y_5)\}$ and $C=\{\{c_1,c_2,c_3,c_4\}: c_i \in L(x_i) \text{ for each } i\in [4]$\}. Let $\mathcal{P}$ be the power set of $C$. Then, let $g: T \rightarrow \mathcal{P}$ be the function given by $g(L(y_i))=A$ where $A=\{B\in C:L(y_i)\subseteq B\}$. Since there is no proper $L$-coloring, $\bigcup_{i=1}^5 g(L(y_i))=C$. We claim $|g(L(y_i))|\leq 2$ for each $i\in [5]$. To see why, notice if $i$, $j$, and $k$ are pairwise distinct elements of $[8]$, then at most two elements of $C$ contain $i$, $j$, and $k$.  Consequently, $|g(L(y_i))| \leq 2$ for each $i \in [5]$.  Thus,
$$16 = |C| = \left | \bigcup_{i=1}^5 g(L(y_i)) \right| \leq 10$$
which is a contradiction. 

Now, suppose $M=2$. Without loss of generality, suppose $c_1\in L(x_1)\cap L(x_2)$. If there is a $c_2 \in L(x_3)\cap L(x_4)$, we can complete a proper $L$-coloring of $G$ by coloring $x_1$ and $x_2$ with $c_1$, coloring $x_3$ and $x_4$ with $c_2$, and greedily coloring the remaining vertices.  So, $L(x_3) \cap L(x_4)=\emptyset$. Suppose $L(x_3) = \{a,b\}$ and $L(x_4) = \{c,d\}$.  Notice that it is possible to color each vertex in $X$ with a color from its list and only use: $c_1$, a color from $\{a,b\}$, and a color from $\{c,d\}$. So, we may suppose without loss of generality $L(y_1)=\{c_1,a,c\}$, $L(y_2)=\{c_1,a,d\}$, $L(y_3)=\{c_1,b,c\}$, and $L(y_4)=\{c_1,b,d\}$. Let $d_i$ be the only color in $L(x_i) - \{c_1\}$ for each $i \in [2]$.  Then, suppose $L'(y_5) = L(y_5) - \{d_1, d_2\}$, and notice $|L'(y_5)| \geq 1$.  We can now construct a proper $L$-coloring of $G$ as follows.  First, color $y_i$ with $c_1$ for each $i \in [4]$.  Then, color $y_5$ with a color $c_2 \in L'(y_5)$.  Finally, since $L(x_i) - \{c_1, c_2\}$ is nonempty for each $i \in [4]$, we can greedily complete our proper $L$-coloring. 

Finally, suppose that $3 \leq  M \leq 4$ and $\mu(c_1) \geq 3$.  Now, color all of the vertices in $X$ whose list contains $c_1$ with $c_1$. After this let $C$ be the set of vertices in $X$ that have been colored.  Now let $L'$ be the list assignment for $G-C$ given by $L'(v) = L(v) - \{c_1\}$ for each $v \in V(G-C)$.  Since $|L'(v)| \geq 2$ for each $v \in V(G-C)$ and $G-C$ is a subgraph of a 2-choosable graph (namely, $K_{1,5}$), a proper $L'$-coloring of $G - C$ exists and such a coloring can be used to complete a proper $L$-coloring of $G$. This completes the proof of Claim D.
\end{proof}

We now present a lemma that will be used in combination with Theorem~\ref{thm: 3_2} in order to prove Theorem~\ref{thm: K3n} and Proposition~\ref{pro: forfree}.

\begin{lem} \label{lem: connect}
Suppose $G = K_{m,n}$, $m,n \in \N$, $k \geq 2$, and $G$ is $k$-choosable. If $K_{m,n-1}$ is $(k-1,k)$-choosable and $K_{m-1,n}$ is $(k,k-1)$-choosable then $1/(m+n) \leq \epsilon_{\ell}(G,k)$. If $K_{m,n-1}$ is not $(k-1,k)$-choosable or $K_{m-1,n}$ is not $(k,k-1)$-choosable then $\epsilon_{\ell}(G,k) = 0$.
\end{lem}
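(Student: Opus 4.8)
The plan is to prove the two implications separately. Throughout, note that $\rho(K_{m,n})=2$ and $|V(G)|=m+n$, so $1/(m+n)\le 1/2=1/\rho(G)$, and that $k$-choosability of $G$ guarantees $k\ge\chi_\ell(G)$, so $\epsilon_\ell(G,k)$ is defined. The key reduction I would use is: for any request $r$ with nonempty domain $D$ we have $|D|\le m+n$, so a proper $L$-coloring is $(1/(m+n))$-satisfying for $(G,L,r)$ as soon as it satisfies a single vertex request; conversely, $\epsilon_\ell(G,k)=0$ will follow once I exhibit one $k$-assignment $L$ and request $r$ for which no proper $L$-coloring satisfies any vertex request.

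For the first implication I would assume $K_{m,n-1}$ is $(k-1,k)$-choosable and $K_{m-1,n}$ is $(k,k-1)$-choosable, fix an arbitrary $k$-assignment $L$ of $G$ and request $r$ with domain $D$, pick $v\in D$ (taking $v\in D\cap X$ if $D\cap X\neq\emptyset$, otherwise $v\in D\cap Y$), and set $f(v)=r(v)$. If $v=x_1\in X$, then $G-x_1=K_{m-1,n}$ with parts $\{x_2,\dots,x_m\}$ (size $m-1$) and $Y$ (size $n$); assigning $L(x_i)$ to each $x_i$ with $i\ge2$ and $L(y_j)-\{r(v)\}$ to each $y_j\in Y$ gives list sizes $k$ and at least $k-1$, so the hypothesis that $K_{m-1,n}$ is $(k,k-1)$-choosable — together with monotonicity of $f$-choosability — yields a proper coloring of $G-x_1$ with these lists, which extends $f$ to a proper $L$-coloring of $G$. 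If $v=y_1\in Y$ the identical argument uses $G-y_1=K_{m,n-1}$ and the $(k-1,k)$-choosability hypothesis. In either case $f$ satisfies the request at $v$, so $G$ is $(k,1/(m+n))$-flexible, i.e.\ $1/(m+n)\le\epsilon_\ell(G,k)$. (If $m=1$ or $n=1$, one deleted graph is edgeless and the corresponding hypothesis holds trivially.)

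For the second implication I would suppose first that $K_{m-1,n}$ is not $(k,k-1)$-choosable, and fix a $(k,k-1)$-assignment $L_0$ of $K_{m-1,n}$ with no proper coloring, identifying the part of size $m-1$ with $\{x_2,\dots,x_m\}$ and the part of size $n$ with $Y$. Let $c^*$ be a color in no list of $L_0$, and define a $k$-assignment $L$ of $G$ by $L(x_i)=L_0(x_i)$ for $i\ge2$, $L(y_j)=L_0(y_j)\cup\{c^*\}$ for $j\in[n]$, and $L(x_1)=\{c^*\}\cup S$ where $S$ is a set of $k-1$ colors used nowhere else. Take $r$ with domain $\{x_1\}$ and $r(x_1)=c^*$. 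A proper $L$-coloring $f$ with $f(x_1)=c^*$ would have $f(y_j)\in L(y_j)-\{c^*\}=L_0(y_j)$ for all $j$ and $f(x_i)\in L_0(x_i)$ for all $i\ge2$, hence its restriction to $G-x_1$ would be a proper $L_0$-coloring of $K_{m-1,n}$, a contradiction; so no proper $L$-coloring satisfies the request. Since $G$ is $k$-choosable, some proper $L$-coloring exists, so $(G,L,r)$ is not $\epsilon$-satisfiable for any $\epsilon>0$, giving $\epsilon_\ell(G,k)=0$. The case where $K_{m,n-1}$ is not $(k-1,k)$-choosable is symmetric: take a bad $(k-1,k)$-assignment $L_0$, a fresh color $c^*$, set $L(x_i)=L_0(x_i)\cup\{c^*\}$ and $L(y_j)=L_0(y_j)$ for $j\ge2$, let $L(y_1)=\{c^*\}\cup S'$ with $S'$ a set of $k-1$ fresh colors, and request $c^*$ at $y_1$.

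I do not expect a real obstacle here; the only points needing care are correctly tracking the paper's $(a,b)$-choosability convention — so that after deleting a vertex the produced lists match the hypothesis with the right part playing the smaller-list side — and confirming that the degenerate cases $m=1$ or $n=1$ (one part of a deleted bipartite graph empty) cause no issue.
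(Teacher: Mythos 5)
Your proposal is correct and follows essentially the same route as the paper: satisfy one request by precoloring a requested vertex and invoking the asymmetric choosability of the deleted graph, and for the zero case lift a bad $(k,k-1)$- or $(k-1,k)$-assignment to a $k$-assignment of $G$ by adjoining a fresh requested color. The only cosmetic difference is that the paper builds the list of the reinstated vertex from an existing list plus the new color rather than from entirely fresh colors, which changes nothing.
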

\begin{proof}
Let the bipartition of $G$ be $X=\{x_1,\ldots,x_m\}$ and $Y=\{y_1,\ldots,y_n\}$. Suppose $K_{m,n-1}$ is $(k-1,k)$-choosable and $K_{m-1,n}$ is $(k,k-1)$-choosable. Suppose $L$ is an arbitrary $k$-assignment for $G$, $r$ is a request of $L$ with domain $D$, and $D\cap X\neq \emptyset$. Without loss of generality, suppose $x_1 \in D \cap X$.  Let $G' = G - x_1$, $L'(v) = L(v) - r(x_1)$ if $v\in Y$, and $L'(v) = L(v)$ if $v\in X - \{x_1\}$.  Since $G' = K_{m-1,n}$, there is a proper $L'$-coloring of $G'$.  This coloring can be extended to a proper $L$-coloring of $G$ by coloring $x_1$ with $r(x_1)$. Moreover, this proper $L$-coloring satisfies at least $1$ of the vertex requests.  

If $D\cap X = \emptyset$, then $D\cap Y \neq \emptyset$. In this case, a similar argument that uses the fact that $K_{m,n-1}$ is $(k-1,k)$-choosable can be used to show there is a proper $L$-coloring of $G$ that satisfies at least $1$ of the vertex requests. Thus, $\epsilon_{\ell}(G,k) \geq 1/(m+n)$.

Now suppose $K_{m,n-1}$ is not $(k-1,k)$-choosable. Let $G'=G-y_1$. Suppose $L'$ is a $(k-1,k)$-assignment for $G'$ such that there is no proper $L'$-coloring of $G'$. Suppose $c\notin \bigcup_{v\in V(G')} L'(v)$. Let $L$ be the $k$-assignment for $G$ defined as follows \[L(v)= \begin{cases}
L'(v)\cup \{c\} & \text{if } v\in X, \\
L'(x_1)\cup \{c\} & \text{if } v=y_1, \\
L'(v) & \text{otherwise}.
\end{cases}\]
Suppose $r: \{y_1\} \rightarrow \bigcup_{v \in V(G)}L(v)$ is the request of $L$ given by $r(y_1)=c$.  Since there is no proper $L'$-coloring of $G'$, there is no proper $L$-coloring of $G$ that colors $y_1$ with $c$.  This means that none of the vertex requests can be satisfied.

If $K_{m,n-1}$ is $(k-1,k)$-choosable, but $K_{m-1,n}$ is not $(k,k-1)$-choosable, we can use a similar idea to construct a $k$-assignment $L$ for $G$ and a request $r$ of $L$ for which none of the vertex requests can be satisfied.  Thus, if $K_{m,n-1}$ is not $(k-1,k)$-choosable or $K_{m-1,n}$ is not $(k,k-1)$-choosable, then $\epsilon_{\ell}(G,k) =0$.
\end{proof}

We are now ready to prove Theorem~\ref{thm: K3n} which we restate.

\begin{customthm} {\bf \ref{thm: K3n}}
Suppose $G = K_{3,n}$ and $n \geq 3$ and $n\in \mathbb{N}$.  Then, the following statements hold. 
\begin{enumerate}[label=(\roman*)]
\item $\epsilon_{\ell}(G,t)= 1/2$ whenever $t\geq 4$.  
\item $\epsilon_{\ell}(G,3)= 1/2$ when $n=3$. 
\item $1/3 \leq \epsilon_{\ell}(G,3)\leq 1/2$ whenever $4 \leq n \leq 6$. 
\item $\epsilon_{\ell}(G,3) = 1/3$ whenever $7 \leq n \leq 8$. 
\item $\epsilon_{\ell}(G,3)= 0$ whenever $9\leq n \leq 26$.
\end{enumerate}
\end{customthm}

\begin{proof}
Throughout the proof suppose the bipartition of $G$ is $X=\{x_1,x_2,x_3\}$ and $Y=\{y_1,y_2,\ldots, y_n\}$.  First, notice that Statement~(i) follows immediately from Theorem~\ref{thm: awesome_sauce}.  

We now prove Statement~(ii).  Suppose $n=3$, $L$ is an arbitrary 3-assignment for $G$, and $r : D \rightarrow \bigcup_{v \in V(G)} L(v)$ is a request of $L$.  We will show there is a proper $L$-coloring of $G$ that satisfies at least $|D|/2$ vertex requests. First, suppose $ |D| \leq 2$. Without loss of generality, assume $r(x_1)=c_1$.  Let $L'$ be the list assignment for $G-x_1$ given by $L'(x_j)=L(x_j)$ if $j\in \{2,3\}$ and $L'(y_j)= L(y_j)- \{c_1\}$ if $j\in[3]$. By Theorem~\ref{thm: 3_2}, there is a proper $L'$-coloring of $G-x_1$. Thus, there is a proper $L$-coloring of $G$ that colors $x_1$ with $c_1$.

Now, suppose $3 \leq |D| \leq 4$. We will construct a proper $L$-coloring $f$ of $G$ that satisfies that at least two vertex requests. We know at least two elements of $D$ are in one of the partite sets. Without loss of generality, assume $r(x_1)=c_1$ and $r(x_2)=c_2$. By Lemma~\ref{lem: P3}, we know there is a proper $L$-coloring of $G$ that colors $x_i$ with $c_i$ for each $i \in [2]$ unless $L(y_1)=\{c_1,c_2,q\}$, $L(y_2)=\{c_1,c_2,r\}$, $L(y_3)=\{c_1,c_2,s\}$, and $L(x_3)=\{q,r,s\}$. So, we assume these four equalities are true. If $x_3 \in D$, let $f(x_3)=r(x_3)$, $f(x_2)=c_2$, and $f(y)=c_1$ for all $y \in Y$. To complete $f$, color $x_1$ with an element of $L(x_1)-\{c_1\}$.

If $x_3 \notin D$, then assume without loss of generality that $y_1 \in D$. Note that $r(y_1)\neq c_1$ or $r(y_1)\neq c_2$. If $r(y_1)\neq c_1$, let $f(y_1)=r(y_1)$, $f(x_1)=c_1$, and $f(y_2)=f(y_3)=c_2$. For the remaining uncolored vertices $x$ in $X$, let $f(x)$ be an element of $(L(x)-\{r(y_1),c_2\})$. If $r(y_1)\neq c_2$, let $f(y_1)=r(y_1)$, $f(x_2)=c_2$, and $f(y_2)=f(y_3)=c_1$. For each of the remaining uncolored vertices $x$ in $X$, let $f(x)$ be an element of $(L(x)-\{r(y_1),c_1\})$.

Finally, suppose $5 \leq |D| \leq 6$. Assume without loss of generality that $r(x_i)=c_i$ for $i \in [3]$, $r(y_1)=c_4$, and $r(y_2)=c_5$. Notice these assumptions mean $\{x_1,x_2,x_3,y_1,y_2\} \subseteq D$. We will construct a proper $L$-coloring of $G$, $f$, that satisfies at least $3$ vertex requests. Suppose that $c_1$, $c_2$, and $c_3$ are not pairwise distinct. Let $f(x_i)=c_i$ for all $i \in [3]$. Since there are $i,j \in [3]$ satisfying $i \neq j$ and $c_i=c_j$, $(L(y)-\{c_1,c_2,c_3\})$ is nonempty for each $y \in Y$. So, we can complete $f$ by letting $f(y)$ be an element of $L(y)-\{c_1,c_2,c_3\}$ for each $y \in Y$. So, we may assume that $c_1$, $c_2$, and $c_3$ are pairwise distinct.

Notice in the case that $L(y) \neq \{c_1,c_2,c_3\}$ for each $y \in Y$.  We can construct a desired coloring by letting $f(x_i)=c_i$ for all $i \in [3]$, and letting $f(y)$ be an element of $L(y)-\{c_1,c_2,c_3\}$ for each $y \in Y$. Therefore, we may assume $L(y)=\{c_1,c_2,c_3\}$ for some $y \in Y$.

Now, assume without loss of generality that $c_4 \notin \{c_1,c_2\}$.   Let $f(x_1)=c_1$, $f(x_2)=c_2$, and $f(y_1)=c_4$. By Lemma~\ref{lem: P3}, we can complete $f$ so that it has the desired properties unless $L(y_2)=\{c_1,c_2,q\}$, $L(y_3) = \{c_1,c_2,r\}$, and $L(x_3)=\{c_4,q,r\}$. Thus, we assume these three equalities are true.  Since $c_3 \in \{c_4,q,r\}$, we have that $c_3 = q$, $c_3 = r$, or $c_3 = c_4$.  We will construct a proper $L$-coloring $f$ of $G$ satisfying at least 3 vertex requests in each of these cases.

If $c_3 \in \{q,r\}$, let $f(x_1)=c_1$, $f(x_3)=c_3$, $f(y_1)=c_4$, $f(y_2) = c_2$, and $f(y_3)=c_2$.  Finally let $f(x_2)$ be an element of $L(x_2) - \{c_2,c_4\}$.

Now, assume that $c_3 = c_4$.  Then, since $L(y)=\{c_1,c_2,c_3\}$ for some $y \in Y$, we have that $L(y_1) = \{c_1,c_2,c_3\}$.  Since $c_5 \in \{c_1,c_2,q\}$, we may suppose that some fixed $j \in [2]$ satisfies $c_j \in \{c_1,c_2\} - \{c_5\}$.  Let $j'$ be such that $\{j,j'\} = [2]$.  Let $f(x_j)=c_j$, $f(x_3)=c_3$, $f(y_1)=c_{j'}$, $f(y_2) = c_5$, and $f(y_3)=c_{j'}$.  Finally color $x_{j'}$ with an element of $L(x_{j'}) - \{c_{j'},c_5\}$.

Now we turn our attention to Statements~(iii) and~(iv).  First, Proposition~9 in~\cite{CH25} combined with Proposition 14 in~\cite{KM23} implies that $1/3 \leq \epsilon_{\ell}(G,3)$ when $n \in \{4,5,6,7,8\}$.  When $n \in \{4,5,6\}$, we clearly have $\epsilon_{\ell}(G,3) \leq 1/ \rho(G) = 1/2$ which completes the proof of Statement~(iii).

To complete the proof of Statement~(iv), suppose $n \in \{7,8\}$.  Let $L$ be the $3$-assignment for $G$ given by $L(x_1)=\{1,4,5\}$, $L(x_2)=\{2,6,7\}$, $L(x_3)=\{3,8,9\}$, $L(y_1)=[3]$, $L(y_2)=\{1,2,8\}$, $L(y_3)=\{1,2,9\}$, $L(y_4)=\{1,3,6\}$, $L(y_5)=\{1,3,7\}$, $L(y_6)=\{2,3,4\}$, and $L(y_7)=\{2,3,5\}$. If $n=8$, then let $L(y_8) = [3]$. Additionally, suppose that that $r$ is the request of $L$ with domain $\{x_1,x_2,x_3\}$ given by $r(x_i) = i$ for each $i \in [3]$. We claim that there is no proper $L$-coloring of $G$ that satisfies at least two of the vertex requests (consequently $\epsilon_{\ell}(G,3) \leq 1/3$). 

Assume for the sake of contradiction that $f$ is a proper $L$-coloring of $G$ satisfying at least 2 vertex requests. This means $f(x_i) = i$ for at least two $i \in [3]$.  Suppose $f(x_1)=1$ and $f(x_2)=2$. Then, $f(X) \in \{L(y_1),L(y_2),L(y_3)\}$. Second, suppose $f(x_1)=1$, and $f(x_3)=3$. Then, $f(X) \in \{L(y_1),L(y_4),L(y_5)\}$. Third, suppose $f(x_2)=2$, and $f(x_3)=3$. Then, $f(X) \in \{L(y_1),L(y_6),L(y_7)\}$. In each case we get a contradiction that $f$ is a proper $L$-coloring. Therefore, $G$ is not $L$-colorable. 

Finally, we prove Statement (v) suppose $n \in \{9,10, \ldots, 26\}$.  Note that by Theorem~\ref{thm: 3_2}, $K_{2,n}$ is not $(3,2)$-choosable.  So, by Lemma~\ref{lem: connect} we have that $\epsilon_{\ell}(G,3)  = 0$ as desired.
\end{proof}

Finally, we prove Proposition~\ref{pro: forfree}. Recall for $3 \leq m \leq n$, $K_{m,n}$ is 3-choosable if and only if $m=3$ and $n \leq 26$, $m=4$ and $n \leq 20$, $m=5$ and $n \leq 12$, or $m=6$ and $n \leq 10$.

\begin{custompro} {\bf \ref{pro: forfree}}
    Suppose $G=K_{m,n}$ and $m \leq n$. Then, 
    $\epsilon_{\ell}(G,3) = 0$ when $m=4$ and $7 \leq n \leq 20$, $m=5$ and $n \leq 12$, or $m=6$ and $n \leq 10$.  When $m=4$ and $n \leq 6$, $1/|V(G)| \leq \epsilon_{\ell}(G,3) \leq 1/2$.
\end{custompro}

\begin{proof}
First, suppose $m=4$ and $7 \leq n \leq 20$, $m=5$ and $n \leq 12$, or $m=6$ and $n \leq 10$.  Notice this means $G$ is 3-choosable.  Then, by Theorem~\ref{thm: 3_2} $K_{m-1,n}$ is not $(3,2)$-choosable. So, by Lemma~\ref{lem: connect} we have $\epsilon_{\ell}(G,3) = 0$.

Second, suppose $m=4$ and $n\leq6$.  Clearly $G$ is 3-choosable.  Note that by Theorem~\ref{thm: 3_2}, $K_{4,n-1}$ is $(2,3)$-choosable and $K_{3,n}$ is $(3,2)$-choosable.  So, by Lemma~\ref{lem: connect} we have that $\epsilon_{\ell}(G,3) \geq 1/(4+n) = 1/|V(G)|$.  We also immediately have $\epsilon_{\ell}(G,3) \leq 1/\rho(G) = 1/2$.  
\end{proof}

\noindent{\bf Acknowledgment:} The authors would like to thank Doug West for his encouragement. The authors would also like to thank Hemanshu Kaul, Michael Pelsmajer, and the anonymous reviewers for their helpful comments.  This project was completed by the Coloring Research Group of South Alabama at the University of South Alabama during the spring 2024, summer 2024, and fall 2024 semesters.  The support of the University of South Alabama is gratefully acknowledged.

\end{document}